\crefname{assumption}{assumption}{assumptions}
\numberwithin{equation}{section}
\theoremstyle{plain}
\newtheorem{theorem}{Theorem}
\newtheorem{lemma}{Lemma}
\newtheorem{corollary}{Corollary}
\newtheorem{proposition}{Proposition}
\newtheorem*{claim*}{Claim}
\theoremstyle{remark}
\newtheorem{assumption}{Assumption}
\newtheorem{example}{Example}
\newtheorem{definition}{Definition}
\newtheorem{remark}{Remark}
\newcommand{\E}{\mathbb{E}}
\newcommand{\N}{\mathbb{N}}
\renewcommand{\P}{\mathbb{P}}
\newcommand{\R}{\mathbb{R}}
\newcommand{\cA}{\mathcal{A}}
\newcommand{\cB}{\mathcal{B}}
\newcommand{\cE}{\mathcal{E}}
\newcommand{\cF}{\mathcal{F}}
\newcommand{\cN}{\mathcal{N}}
\newcommand{\cO}{\mathcal{O}}
\newcommand{\cS}{\mathcal{S}}
\newcommand{\cV}{\mathcal{V}}
\newcommand{\Ew}[1]{\mathbb{E}\left[#1\right]}
\renewcommand{\Pr}[1]{\mathbb P\left(#1\right)}
\newcommand{\norm}[1]{\lVert #1 \lVert}
\newcommand{\Ind}[1]{\mathbbm{1}_{ #1 }}
\begin{document}

\begin{frontmatter}
\title{Distributed gradient-based optimization in the presence of dependent aperiodic communication}

\begin{aug}
\author[A]{\inits{A.}\fnms{Adrian}
\snm{Redder$^{*}$}\ead[label=e1]{aredder@mail.upb.de}},
\author[B]{\inits{A.}\fnms{Arunselvan}
\snm{Ramaswamy}\ead[label=e2]{arunselvan.ramaswamy@kau.se}}
\and
\author[C]{\inits{H.}\fnms{Holger} \snm{Karl}\ead[label=e3]{holger.karl@hpi.de}}
\address[A]{Department of Computer Science, Paderborn University. \printead{e1}}
\address[B]{Department of Computer Science, Karlstad University. \printead{e2}}
\address[C]{Hasso-Plattner-Institute, University of Potsdam. \printead{e3}}
\end{aug}

\begin{abstract}
	Iterative distributed optimization algorithms involve multiple agents that communicate with each other, over time, in order to minimize/maximize a global objective. In the presence of unreliable communication networks, the Age-of-Information (AoI), which measures the freshness of data received, may be large and hence hinder algorithmic convergence. In this paper, we study the convergence of general distributed gradient-based optimization algorithms in the presence of communication that neither happens periodically nor at stochastically independent points in time. We show that convergence is guaranteed provided the random variables associated with the AoI processes are stochastically dominated by a random variable with finite first moment. This improves on previous requirements of boundedness of more than the first moment. We then introduce stochastically strongly connected (SSC) networks, a new stochastic form of strong connectedness for time-varying networks. We show: If for any $p \ge0$ the processes that describe the success of communication between agents in a SSC network are $\alpha$-mixing with $n^{p-1}\alpha(n)$ summable, then the associated AoI processes are stochastically dominated by a random variable with finite $p$-th moment. In combination with our first contribution, this implies that distributed stochastic gradient descend converges in the presence of AoI, if $\alpha(n)$ is summable.

\end{abstract}

\begin{keyword}
\kwd{Distributed Optimization}
\kwd{Stochastic Gradient Decent}
\kwd{Time-Varying Networks}
\kwd{Age of Information}
\kwd{$\alpha$-Mixing}
\kwd{Stochastic dominance}

\end{keyword}

\end{frontmatter}


\section{Introduction}
\label{sec: intro}

Distributed optimization of stochastically approximated loss functions lies at the heart of many system-level problems that arise in multi-agent learning \cite{redder2022asymptotic}, resource allocation for data centers \cite{haghshenas2019magnetic}, or decentralized control of power systems \cite{lin2017decentralized}. In these scenarios, distributed implementations have many advantages such as balanced workload or the avoidance of a single point of failure. However, this usually comes with high communication costs for coordination \cite{tang2018communication}, entailing that information can only be exchanged rarely, causing local versions of global information to be significantly outdated. Hence, it is of high interest to characterize conditions such that a distributed optimization algorithm can converge when only significantly outdated information with sporadic updates is available.

We therefore consider distributed stochastic optimization problems (SOPs) where the choice of local optimization variables has to be coordinated over an uncertain time-varying communication network. A typical distributed SOP can take the following form:
\begin{equation}
\label{eq: intro_problem}
\begin{split}
x^* = (x_1^*, \ldots, x_D^*) &= \underset{x \in \R^d}{\text{argmin }} \E_\xi \left[ f(x,\xi)\right]
\end{split}
\end{equation}
The objective is to minimize a real-valued function $f:\R^d \times \cS \to \R$, which is function of an optimization variable $x \in \R^d$ and a random variable $\xi$ representing noise or uncertainty taken from a set $\cS$. The optimization variable $x$ is composed of local components $x_i \in \R^{d_i}$ that are associated with local agents of a distributed system. Hence, no global control of $x$ is possible. Moreover, the distribution of $\xi$ is typically unknown in practical scenarios and the agents can only observe samples $\xi^n$ of the uncertainty $\xi$ at discrete time steps $n \in \N_0$. Thus, the problem is that each agent~$i$ has to coordinate the local choice for its variable $x_i$ with all other agents by exchanging information over a network and iteratively refine the choice based on the observed samples of uncertainty $\xi^n$.


To solve this problem, we propose the following solution. Suppose every agent~$i$ runs a local distributed stochastic gradient descent (SGD) algorithm that generates a sequence $\{x_i^n\}_{n=0}^\infty$ to solve problem \eqref{eq: intro_problem}. Ideally, every agent $i$ would like to have direct access to every new element of the sequences $\{x_j^n\}_{n=0}^\infty$ from every other agent $j\not= i$ during run-time of its own local algorithm. However, due to the distributed nature of the considered setting, the agents have to communicate their updates for their local optimization variables to other agents via a communication network.
Because of the uncertainty of communication networks, each agent~$i$ can therefore only use delayed versions $x_j^{n-\tau_{ij}(n)}$ for all $j \not= i$ to update its own local variable $x_i^n$. Here, $x_j^{n-\tau_{ij}(n)}$ denotes the newest update of $x_j$ available at agent~$i$ at time $n$ and $\tau_{ij}(t)$ is its corresponding age. \emph{We refer to the $\tau_{ij}(t)$'s as the Age of Information (AoI) variables.}
The resulting distributed algorithm is therefore in essence a ``straightforward'' implementation of SGD, where the true values of local variables are replaced by their aged counterparts. Due to the size of generated information in large distributed systems, and the uncertainty and high cost for communication over networks, the AoI variables can not be expected to be bounded and should therefore be modelled as an unbounded sequence of random variables.
\emph{The problem is therefore to formulate mild network and communication assumptions that are representative and easily verifiable such that this SGD algorithm that uses highly aged information will still converge.}

A major challenge for this problem is the multitude of potential factors that affect the AoI random variables. Information exchange between some pairs of agents might experience unbounded delays; mobility of agents or network scheduling algorithms can induce a varying set of network topologies. This can create dependencies among successive network transmissions, preventing agents from exchanging data for extended periods of time. In general, transmissions that happen close in some domain (e.g. time,  frequency, or space in wireless communication) are expected to be highly correlated. It is therefore important to formulate a communication network model and associated assumptions that can represent these cases while being mathematically tractable for analysis. Notably, the assumption of guaranteed periodic or stochastically independent communication is practically unrealistic.

\subsection{Network models in the literature}

One of the most common models in the distributed optimization literature is a time-varying network model that is represented by a time-varying graph (\Cref{def:tv_network}). For this graph, the most common assumption is that there is a constant $M$ such that the union graph associated with all time intervals $[n, n+M]$ is strongly connected \cite{wang2019distributed, aybat2019distributed, Yu2020-gq,kovalev2021adom}. A network with this property is typically called uniformly strongly connected \cite{nedic2014distributed}, $M$-strongly connected \cite{nedic2017achieving,Scutari2019-sa} or jointly strongly connected \cite{xu2017distributed}. This model implies guaranteed periodic communication. Another common model is to assume a time-varying network graph whose expected union graph is strongly connected, where the events that describe the success of communication across network edges are independent across time \cite{bastianello2020asynchronous, Lei2018-ag, koloskova2020unified, ram2022_tac}. 

In ref.~\cite{Lei2018-ag, nedic2017achieving, wang2019distributed, Scutari2019-sa, Yu2020-gq, koloskova2020unified, bastianello2020asynchronous, kovalev2021adom} 
the objective is that agents come to a consensus on one global optimization variable to minimize the sum of real-valued functions, each of which associated with one of the local agents. Although such consensus-type problems might appear quite different to \eqref{eq: intro_problem}, it turns out that an algorithm for \eqref{eq: intro_problem} can also find a solution for consensus problems after a minor reformulation at the cost of additional communication, which we discuss in \cite{redder2022constraint}.  In contrast to the consensus type problems, ref.~\cite{xu2017distributed, aybat2019distributed, ram2022_tac} and this work consider distributed optimization problems where each agent has to select a local optimization variable, such that the combination of all local variables solves a global optimization problem. 

Observe that literature exclusively considers network models that either guarantee periodic communication or require communication based on independent events. We believe that these are restrictive assumptions that do not represent real-world communication networks well. To close this gap, we present a less restrictive network model and verifiable network conditions that guarantee that an SGD algorithm finds a solution to problems of the form \eqref{eq: intro_problem}. We also show the aforementioned typical network assumptions from the literature are stronger versions of our new set of network assumptions (\Cref{asm: network-A,asm: network-B}). Our assumptions only require a stochastic form of strong connectivity and a dependency decay (mixing) property.  \emph{To the best of our knowledge, ours is the first work that guarantees asymptotic convergence of a distributed optimization scheme under such mildly restrictive conditions, connecting an abstract optimization theory with a wide range verifiable network conditions.} However, it must be noted that other papers (such as those discussed above) can provide rate of convergence results while we merely give an almost sure convergence analysis. 



\subsection{Summary of contribution}
Our work contributes to the literature of network conditions that guarantee asymptotic convergence to the set of stationary points of a distributed stochastic optimization problems with potentially non-convex objective function.
Our work, builds on our previous work on SGD for time-varying networks \cite{ram2022_tac}. However, whereas in \cite{ram2022_tac} the focus was on the optimization iteration, with a strong and restrictive i.i.d. network assumption, this work focuses on guaranteeing convergence under significantly weaker network conditions. Most importantly, our network conditions cover time-varying network topologies, unbounded communication delays, non-independent aperiodic communication, asynchronous local updates and event-driven communication. 

As the first step, we describe a distributed stochastic gradient descend algorithm (\Cref{alg: 1}) that instead of true local variables uses aged variables as a consequence of network communication. The AoI variables $\tau_{ij}(n)$ therefore induce gradient errors when comparing \Cref{alg: 1} with and without AoI. 
As our first major contribution, we show in \Cref{lem:vanishing_errors} that the aforementioned gradient errors vanish asymptotically under an asymptotic growth conditions for the AoI variables. Specifically, we require that all $\tau_{ij}(n)$ for all $n \in \N_0$ are stochastically dominated by a non-negative integer-valued random variable with \emph{at least finite first moment}. This provides a significant weakening of traditional assumptions from the stochastic approximation literature in the present setting, since traditionally a dominating random variable with at least a bounded moment greater than one was required. With \Cref{lem:vanishing_errors} we then show the convergence of \Cref{alg: 1} in \Cref{thm:convergence}.

Our second contribution is a universally applicable time-varying network model and associated assumptions to generally verify the existence of  dominating random variables with an arbitrary required moment conditions. Our time-varying network model is formulated using events $A^n_{ij}$, each of which represents successful information exchange from some agent~$i$ to another agent~$j$ during some time slot $n$. We then introduce the notion of $(\varepsilon,\kappa)$-stochastically strongly connected (SSC) network with $\varepsilon \in (0,1)$ and $\kappa \in \N_0$. This notion requires that there is a set of network edges that form a strongly connected graph for which $\Pr{\cup_n^{n+\kappa} A_{ij}^n)} \ge \varepsilon$ for all $n \in N_0$. In other words, for those edges communication is successful at least ones over every interval of length $\kappa$ with at least probability $\varepsilon$. We then present a general recipe to validate stochastic dominance properties with required moment conditions. Afterwards, \Cref{thm: 0} presents our main result: Fix any $p\ge0$ and consider a $(\varepsilon,\kappa)$-SSC network. If there exists some $\eta \in \N_0$, such that the processes $\Ind{\bigcup_{k=n}^{n+\eta} A^n_{ij}}$ are $\alpha$-mixing with $\sum_{n=0}^{\infty}n^{p-1} \alpha(n)  < \infty$, then all AoI variables $\tau_{ij}(n)$ are stochastically dominated by non-negative integer-valued random variable $\overline{\tau}$ with $\Ew{\overline{\tau}^p} < \infty$. This result, together with \Cref{thm:convergence}, will therefore imply our final convergence result for \Cref{alg: 1} under the minimal requirement of a SSC network with summable $\alpha$-mixing coefficients.

The rest of the paper is structured as follows:
In \Cref{sec:notation} we state notation and preliminaries from probability and graph theory. In \Cref{sec: problem} we discuss the problem formulation and our distributed SGD algorithm. Afterwards we prove the almost sure convergence of \Cref{alg: 1} in \Cref{sec: cov_ana} under asymptotic growth conditions for the AoI variables. We then introduce our time-varying network model and associated assumptions in \Cref{sec:network_conditions}. \Cref{sec: analysis} then presents our construction to validate stochastic dominance properties and our main results.
Finally, we discuss the verifiability of our network assumptions and future work in \Cref{sec:conclusion}.



\section{Notation, definitions and preliminaries}
\label{sec:notation}
This section presents notation, and preliminaries from probability and graph theory. Throughout our work, discrete points in time are indicated by superscript letters $n$. We refer to a time slot $n$ as the time interval from time step $n-1$ to $n$. We use $n \in \N_0$ to denote $n \in \N\cup \{0\}$. 

We make frequent use of the big $\cO$ notation: Consider two real-valued sequences $x^n$, $y^n$. Then  $x^n \in \cO(y^n)$, if $\limsup\limits_{n \rightarrow \infty} \frac{x^n}{y^n}  < \infty$.

From probability theory we need the concepts of stochastic dominance, expectation of non-negative integer-valued random variables, measure of dependency and $\alpha$-mixing:

\begin{definition}
	\label{def: stochastic_dominance}
	A non-negative integer-valued random variable $\tau$ is said to be \textbf{stochastically dominated} by a random variable $\overline{\tau}$ if $\Pr{\tau > m } \le \Pr{\overline{\tau} > m}$ for all $m \ge 0$.
\end{definition}

\begin{proposition}[\cite{chakraborti2018higher}]
	\label{eq:moment_eq}
	Suppose $\tau$ is non-negative integer-valued random variable, then
	\begin{equation}
	\Ew{\tau^p} = \sum_{m=0}^{\infty} ((m+1)^p-m^p)\Pr{\tau > m}, \qquad p >0.
	\end{equation}
\end{proposition}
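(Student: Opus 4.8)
The statement is the standard integer ``tail-sum'' (layer-cake) identity for the $p$-th moment, and the plan is to prove it by a pointwise telescoping argument followed by an interchange of summation and expectation.

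\emph{Step 1 (pointwise telescoping).} Fix an elementary outcome on which $\tau = k$ for some $k \in \N_0$. Since consecutive terms telescope,
\[
\sum_{m=0}^{k-1}\bigl((m+1)^p - m^p\bigr) \;=\; k^p - 0^p \;=\; k^p,
\]
where the empty sum (the case $k=0$) is read as $0$, which agrees with $0^p = 0$ for $p>0$. Rewriting the index set $\{0 \le m \le k-1\}$ as $\{m : m < k\} = \{m : \tau > m\}$ on this outcome, we obtain, almost surely,
\[
\tau^p \;=\; \sum_{m=0}^{\infty}\bigl((m+1)^p - m^p\bigr)\,\Ind{\tau > m}.
\]

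\emph{Step 2 (take expectations).} The crucial observation that justifies exchanging $\E$ and $\sum$ is that every summand is non-negative: for $p>0$ the map $x \mapsto x^p$ is strictly increasing on $[0,\infty)$, so $(m+1)^p - m^p > 0$, while $\Ind{\tau > m} \ge 0$. Hence Tonelli's theorem (equivalently, monotone convergence applied to the partial sums) yields
\[
\Ew{\tau^p} \;=\; \sum_{m=0}^{\infty}\bigl((m+1)^p - m^p\bigr)\,\Ew{\Ind{\tau > m}} \;=\; \sum_{m=0}^{\infty}\bigl((m+1)^p - m^p\bigr)\,\Pr{\tau > m},
\]
which is the claimed identity. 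Note that no integrability hypothesis on $\tau$ is needed: both sides are well-defined elements of $[0,\infty]$ and coincide, so in particular one is finite precisely when the other is.

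\emph{Main difficulty.} There is essentially no obstacle here; the one point worth stating with care is the justification of the interchange of the countable sum and the expectation, which rests solely on the sign of the increments $(m+1)^p - m^p$ rather than on any moment condition. As a sanity check, for $p = 1$ the increments are all equal to $1$ and the formula reduces to the familiar identity $\Ew{\tau} = \sum_{m \ge 0}\Pr{\tau > m}$.
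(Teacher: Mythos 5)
Your proof is correct. The paper states this proposition as a cited result from the literature without supplying a proof, and your argument (pointwise telescoping of $\tau^p$ into increments $(m+1)^p-m^p$ times tail indicators, followed by Tonelli/monotone convergence justified purely by non-negativity of the increments) is exactly the standard derivation behind that citation; your remark that the identity holds in $[0,\infty]$ with no integrability hypothesis is also the right level of care, since the paper later uses the identity precisely to transfer finiteness between the two sides.
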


Let $(\Omega, \cF, \P)$ be a probability space, and let $\cA$ and $\cB$ be two sub-$\sigma$-algebras of $\cF$. 
The measure of dependency $\alpha$ between $\cA$ and $\cB$ is defined as
	\begin{equation}
	\alpha(\cA,\cB) \coloneqq \sup_{A \in \cA, B \in \cB} \lvert \Pr{A \cap B} - \Pr{A}\Pr{B} \rvert.
	\end{equation}
Consider a stochastic process $X = \{X_n\}_{n \in \N_0}$. For $0 \le l \le m \le \infty$, define the sub-$\sigma$-algebra generated from $X_l$ up to $X_m$ by
\begin{equation}
\cF_l^m \coloneqq \sigma\left(X_n \mid l \le n \le m\right),
\end{equation}
Informally, the $\sigma$-algebra generated by a stochastic process from a time interval describes the information that can be extracted from the associated process realizations, see \cite{durrett2019probability} for details.
\begin{definition}
	\label{def:mixing}
	The $\alpha$-mixing coefficients of the process $X$ are
	\begin{equation}
	\label{eq: alpha_mix}
	\alpha(n) \coloneqq \sup_{l\ge 0} \alpha(\cF_0^l, \cF_{l+n}^\infty).
	\end{equation}
	for every $n \in \N_0$. The process $X$ is called \textbf{strongly mixing} (or $\alpha$-mixing), if $\alpha(n) \rightarrow 0$ as $n \rightarrow \infty$.
\end{definition}

Mixing is a notion of asymptotic independence. We refer to \cite{bradley2005basic} for a survey about different mixing notions. We now introduce a subclass of strongly mixing processes with different rates of convergence:
\begin{definition}
	\label{def: mixing_process}
	The process $X$ is called $\boldsymbol{p}$\textbf{-strongly mixing} for some $p \ge 0$, if
	\begin{equation}
	\sum_{n=0}^{\infty} n^{p-1} \alpha(n) < \infty.
	\end{equation}
\end{definition}
We will use this new mixing property to describe dependency decay of different orders. 

For details on graph theory we refer the reader to \cite{trudeau1993introduction}. We require the following concepts: 

\begin{definition}
	A directed graph is called \textbf{strongly connected}, if every pair of nodes is connected by a directed path.
\end{definition}

\begin{definition}
	\label{def:tv_network}
	A \textbf{time-varying network} is defined as sequence
	\begin{equation}
	\{(\cV,\cE^n)\}_{n \in \N_0},
	\end{equation}
	where each element $(\cV,\cE^n)$ is a directed graph.
\end{definition}

We will use the following new connectivity notion for time-varying networks:
\begin{definition}
	\label{def:SSC}
	A time-varying network $\{(\cV,\cE^n)\}_{n \in \N_0}$ is called \textbf{$(\varepsilon, \kappa)$-stochastically strongly connected (SSC)} with $\varepsilon \in (0,1)$ and $\kappa \in \N_0$ , if there exists a strongly connected graph $(\cV, \cE)$, such that for all  $n \in \N_0$ and
	for all $(i,j) \in \cE$
	\begin{equation}
	\Pr{(i,j) \in \bigcup_{k=n}^{n+\kappa} \cE^k} \ge \varepsilon.
	\end{equation}
\end{definition}


\section{Problem description}
\label{sec: problem}

We consider a $D$-agent distributed optimization problem, where each agent $i \in \cV \coloneqq \{1,\ldots,D\}$ has to choose values for a local variable $x_i \in \R^{d_i}$ to minimize a global objective function $F$. The global optimization variable $x = (x_1, \ldots, x_D) \in \R^d $ is the concatenation of the local optimization variables $x_i$ associated with the local agents. The objective function is assumed to be stochastic and given by
\begin{equation}
\label{eq: unconstrained_problem}
F(x) \coloneqq \E_\xi \left[ f(x,\xi)\right],
\end{equation}
with $f:\R^d \times \cS \to \R$ a random real-valued function, where the randomness is modeled by an $\cS$-valued random variable $\xi$ that represents noise or uncertainty. 

As discussed in the introduction, if a central agent had direct control of the optimization vector $x$, it would be straightforward to find a local minimum of \eqref{eq: unconstrained_problem} using stochastic gradient descend (SGD) under suitable assumptions \cite[Ch. 10]{borkar2009stochastic}. However, as the components $x_i$ of $x$ are associated with distributed agents, we consider that the agents need to coordinate their choice for the local optimization variables by exchanging information via a communication network.

We assume a synchronized communication setting according to a global clock $n \in \N_0$. Each agent updates its local variable at every time step $n$ based on a local gradient descend iteration. The iterations will be defined in \Cref{sec: iteration and algorithm}. For each agent $i\in \cV$, the local iterations generate a sequence $\{x_i^n\}^\infty_{n= 0}$ starting from an initial candidate value $x^0_i$ for the optimal value $x^*_i$. 
To execute the local gradient iteration, agent~$i$ requires a locally available estimate of the current optimization variable $x^n_j$ of agent~$j$ for all $j\not=i$. We consider that these information have to be communicated using a communication network. Specifically, every agent will use the newest available local optimization variable from every other agent to update its own local variable. Due to the potential uncertainty of the network only aged/delayed versions of the local variables of the other agents are available at agent~$i$ at any time step. 
Therefore, agent~$i$ has only access to the delayed version
\begin{equation}
\label{eq:believe_vector}
\hat{X}_i^n \coloneqq \left(x_1^{n-\tau_{i1}(n)}, \ldots, x_i^n, \ldots, x_D^{n-\tau_{iD}(n)} \right)
\end{equation}
of $x^n$ at every time step $n$. Here, $x_j^{n-\tau_{ij}(n)}$ denotes the newest update of $x_j$ available at agent~$i$ at time $n$ and $\tau_{ij}(t)$'s are the AoI random variables. Further, we refer to $\hat{X}_i^n$ as the \emph{local believe vector} of agent~$i$ at time $n$. As the next step, we describe the gradient based iteration that uses $\hat{X}_i^n$ instead of $x^n$ to solve problem \eqref{eq: unconstrained_problem}. 

\subsection{Algorithm}
\label{sec: iteration and algorithm}

We consider that the agents iteratively refine their local variables using the partial derivatives $\nabla_{x_j} f(\cdot, \xi)$.  We assume that the agents do not know the distribution of $\xi$, but during any time slot $n$ an agent can observe an i.i.d.\ realisation $\xi^n$ of $\xi$. For simplicity, we assume that all agents are affected by the same realisation of the random variable $\xi$. In other words, when agent~$i$ and agent~$j$ calculate their partial derivatives during some time slot $n$, they use the same realisation $\xi^n$ of $\xi$, i.e. $\nabla_{x_i} f(\cdotp, \xi^n)$ and $\nabla_{x_j} f(\cdotp, \xi^n)$. The extension to agent-specific realisations of $\xi$ is merely a technical reformulation that was already described in \cite{arxiv}.

To evaluate the partial derivatives $\nabla_{x_i} f(\cdotp, \xi^n)$, agent~$i$ uses the most recent available version of the optimization variable $x^n_j$ of agent~$j$ for all $j\not=i$, i.e. it calculates $\nabla_{x_i} f(\hat{X}^n_i, \xi^n)$ instead of $\nabla{x_i} f(x^n, \xi^n)$. The following SGD iteration is used by each agent to update its local variable:
\begin{equation}
\label{eq: real_iteration}
x^{n+1}_i =  x^n_i -a(n)\left( \nabla_{x_i} f(\hat{X}^n_i, \xi^n) + \lambda_i^n \right), 
\end{equation}
where $\{a(n)\}_{n \in \N_0}$ is a given step-size sequence and $\lambda_i^n$ is a local stochastic additive error that may arise during the calculation of $\nabla_{x_i}f$. \Cref{alg: 1} summarizes the protocol that runs on every agent locally. For now, we assume that the agents use some communication protocol to exchange their local believe vectors $\hat{X}^n_i$ over a network. The protocol and the network properties therefore induce the distribution of the AoI variables $\tau_{ij}(n)$. In the next section, we will proof the convergence of \Cref{alg: 1} under an abstract growth conditions for the AoI variables. 
\Cref{sec:network_conditions} then formulates a communication network model and associated assumptions to satisfy these growth conditions.  

\begin{remark}
	In our previous work \cite{ram2022_tac}, we also included asynchronous gradient updates in \eqref{eq: real_iteration}. The agents are therefore allowed to update their local variable not at every time step $n\ge 0$. This may be included here using the associated assumptions from \cite{ram2022_tac}. Our previous work, considers \eqref{eq: real_iteration} for a restrictive network model with independent communication (see \Cref{sec: comparison} for further details). This work resolves this issue, but we use synchronous gradient updates to avoid notational overload.  
\end{remark}

\begin{algorithm}[!t] 
	\SetAlgoLined
	Initialize local optimization variable estimate $x^0_i$ \; 
	Initialize local belief vector $\hat{X}^0_i$ \;
	\For{all time steps $n$}
	{
		Obtain network sample $\xi^n$ \;
		$x^{n+1}_i \leftarrow x^n_i - a(n) \nabla_{x_i} f(\hat{X}^n_i, \xi^n)$ \;
		Update $i$-th component of $\hat{X}^{n}_i$ to new $x^{n+1}_i$ with appended timestamp $n+1$ \;
		Run local communciation protocol to exchange $\hat{X}^{n}_i$ \;
	}
	\caption{Local algorithm at agent $i \in \cV$}
	\label{alg: 1}
\end{algorithm}

\section{Asymptotic convergence of \Cref{alg: 1}}
\label{sec: cov_ana}

In this section, we will show the asymptotic convergence of \Cref{alg: 1}. Specifically, we show that the iterations in \eqref{eq: real_iteration} converge to a neighbourhood of a local stationary point of \eqref{eq: unconstrained_problem}. The main part of the proof is to show that the gradient errors 
\begin{equation}
\label{eq:grad_error}
e^n_i \coloneqq  \nabla_x F(x_1 ^{n - \tau_{i1}(n)}, \ldots, x_D ^{n - \tau_{iD}(n)}, \xi^n) - \nabla_x F(x_1 ^n, \ldots, x_D ^n, \xi^n)
\end{equation}
due to AoI vanish asymptotically. This error captures the difference of the gradient descent step some agent~$i$ would take given its local believe vector compared to the true global state.

To show that the gradient errors vanish, we require that the AoI variables $\tau_{ij}(n)$ satisfy an asymptotic growth conditions. Observe that the gradient errors depend on the AoI variables and the step size sequence $a(n)$, since $a(n)$ determines how much successive steps of iteration \eqref{eq: real_iteration} differ. If the step size sequence gets smaller quick enough relative to some maximal potential growth of the AoI variables, we expect $e^n_i$ to decay to zero. This is because even significantly outdated information stays relevant, if the steps taken during that time were comparably small. The convergence of \Cref{alg: 1} will then follow from the convergence of \eqref{eq: real_iteration} when one considers no AoI, i.e. the case $\tau_{ij}(n) = 0$. 

The following assumption formalize the required trade of between the choice of the step size sequence and the required network quality. 
\begin{assumption}
	\label{asm: step_size}
	\begin{enumerate}
		\item  There exists $p \in [1, 2)$ and a non-negative integer-valued random variable $\overline{\tau}$, such that $\overline{\tau}$ stochastically dominates (\Cref{def: stochastic_dominance}) all $\tau_{ij}(n)$ for all $i,j \in \cV$ and all $n \in \N_0$ with 
		$$ \Ew{\overline{\tau}^p} < \infty.$$		
		\item The step-size sequence $\{a(n)\}_{n \in \N_0}$ satisfies:
		\begin{enumerate}
			\item[(i)] $\sum \limits_{n = 0}^\infty a(n) = \infty$, $\sum \limits_{n = 0}^\infty a(n) ^2 < \infty$.
			\item[(ii)] $a(n) \in \cO(n^{-\frac{1}{p}})$ with $p$ as in 1.
		\end{enumerate} 
	\end{enumerate}
\end{assumption}

\Cref{asm: step_size}.1 requires that the network quality is good enough, such that the tail distribution of the AoI variables $\tau_{ij}(n)$ decays rapidly enough, such that at least a dominating random variable with finite mean exists. This assumption contributes a significant weakening of the traditional assumptions required for convergence in the present setting. The traditional assumptions formulated in \cite{borkar1998asynchronous}, required at least a dominating random variable with finite $p$-th moment for $p>1$. In this work, we show for the first time that actually $p=1$ is sufficient to achieve asymptotic convergence. 
We show that under \Cref{asm: step_size}.1 the growth of each $\tau_{ij}(n)$ can not exceed any fraction of $n$ after some potentially large time-step. We formulate this in \Cref{lem:finite_CDFsum}.

\Cref{asm: step_size}.1(i) is standard in the stochastic approximation literature. \Cref{asm: step_size}.1(ii) requires that we choose the step size depending on the network quality. For example, if only the worst network quality can be verified, i.e. that there is only a dominating variable with finite mean, then we have to choose $a(n) \in \cO(\frac{1}{n})$. In addition to the aforementioned weakening of assumptions, we also do not require that the step-size sequence is eventually monotonically decreasing and we only require $a(n) \in \cO(n^{-\frac{1}{p}})$ instead of $a(n) \in o(n^{-\frac{1}{p}})$. Both conditions were traditionally assumed.

We will now present additional assumptions associated with the objective function $f$ in \eqref{eq: unconstrained_problem} and the iterations in \eqref{eq: real_iteration}. After that we show the convergence of \Cref{alg: 1}. In \Cref{sec:network_conditions} we will then present verifiable network conditions to ensure that \Cref{asm: step_size}.1 holds. We will also see that it is easy to formulate very restrictive network conditions, such that the growth of the AoI variables behave very well. For example, one can show that all moments of the AoI variables are bounded under the standard assumptions in the distributed optimization literature (see \Cref{sec: comparison}). That is, \Cref{asm: step_size}.1 would be satisfied for all $p\ge1$.

In addition to \Cref{asm: step_size}, we require the following assumptions.
\begin{assumption}
	\label{asm: objective}
	\begin{enumerate}
		\item $\nabla_x f$ is continuous and locally Lipschitz continuous in the $x$-coordinate, where the associated constant may depend on $\xi$. 
		\item $\Ew{\nabla_{x}f} = \nabla_{x} \Ew{f}$.
		\item $\xi$ is an $\mathcal{S}$-valued random variable, where $\mathcal{S}$ is a one-point compactificable space.
	\end{enumerate}
\end{assumption}
\begin{assumption}
	\label{asm: stability}
	For all $i\in \cV$, we have $\sup \limits_{n \in \N_0} \ \lVert x_i^n \rVert < \infty$ a.s.
\end{assumption}
\begin{assumption}
	\label{asm: additive_error}
	Almost surely, $\limsup\limits_{n \to \infty} \norm{\lambda^n} \le \lambda$ for some fixed $\lambda > 0$.
\end{assumption}
We refer to \cite{ram2022_tac} for detailed discussion on the verifiability of \Cref{asm: objective,asm: stability,asm: additive_error}.

Recall the gradient errors due to the AoI variables in \eqref{eq:grad_error}. Next, we will show that these gradient errors vanish asymptotically. We start with an asymptotic grow property for the AoI variables under \Cref{asm: step_size}.1. 

\begin{lemma}
	\label{lem:finite_CDFsum}
	Under \Cref{asm: step_size}.1, we have for every $\varepsilon \in (0,1)$ and for all $i,j \in \cV$ that
	\begin{equation}
	\sum_{n=0}^{\infty} \Pr{\tau_{ij}(n) > \varepsilon n^{\frac{1}{p}}} < \infty.
	\end{equation}
\end{lemma}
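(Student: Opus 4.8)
The plan is to reduce the statement to a standard tail-sum estimate for the dominating variable $\overline{\tau}$. First I would note that stochastic dominance, although \Cref{def: stochastic_dominance} phrases it only for integer thresholds, extends to arbitrary real thresholds $x \ge 0$ when both variables are integer-valued: indeed $\Pr{\tau_{ij}(n) > x} = \Pr{\tau_{ij}(n) > \lfloor x \rfloor} \le \Pr{\overline{\tau} > \lfloor x \rfloor} = \Pr{\overline{\tau} > x}$. Applying this with $x = \varepsilon n^{1/p}$ and using \Cref{asm: step_size}.1 gives
\[
\sum_{n=0}^{\infty} \Pr{\tau_{ij}(n) > \varepsilon n^{\frac{1}{p}}} \;\le\; \sum_{n=0}^{\infty} \Pr{\overline{\tau} > \varepsilon n^{\frac{1}{p}}},
\]
so it suffices to prove that the right-hand side is finite.

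Next, since $t \mapsto t^p$ is strictly increasing on $[0,\infty)$, the event $\{\overline{\tau} > \varepsilon n^{1/p}\}$ coincides with $\{\overline{\tau}^p > \varepsilon^p n\}$. Write $Y \coloneqq \overline{\tau}^p$, a non-negative random variable with $\Ew{Y} = \Ew{\overline{\tau}^p} < \infty$ by \Cref{asm: step_size}.1, and set $c \coloneqq \varepsilon^p > 0$. The goal is then $\sum_{n \ge 0} \Pr{Y > c n} < \infty$. Finally I would invoke the elementary comparison of a monotone tail with its integral: $t \mapsto \Pr{Y > t}$ is non-increasing, hence $c\,\Pr{Y > c n} \le \int_{c(n-1)}^{cn} \Pr{Y > t}\, dt$ for every $n \ge 1$; summing over $n$ yields $\sum_{n=1}^{\infty}\Pr{Y > cn} \le \tfrac{1}{c}\int_0^\infty \Pr{Y > t}\, dt = \tfrac{1}{c}\Ew{Y}$, and adding the $n=0$ term (at most $1$) gives $\sum_{n=0}^{\infty}\Pr{Y > cn} \le 1 + \Ew{\overline{\tau}^p}/\varepsilon^p < \infty$, which proves the lemma. (One could instead expand $\Pr{\overline{\tau} > \varepsilon n^{1/p}}$ through \Cref{eq:moment_eq}, but the integral bound is shorter and self-contained.)

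There is no substantial obstacle here: the argument is essentially the equivalence "$\Ew{Y} < \infty \iff \sum_n \Pr{Y > cn} < \infty$" applied to $Y = \overline{\tau}^p$. The only points deserving a line of care are the passage from the integer-threshold definition of stochastic dominance to the real threshold $\varepsilon n^{1/p}$ (handled by the floor function above), and the fact that $\overline{\tau}^p$ need not be integer-valued when $p \notin \N$ — which is harmless, since the integral comparison never uses integrality of $Y$.
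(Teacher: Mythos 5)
Your proof is correct. It takes the same basic road as the paper's — reduce to the tail sum of the dominating variable via stochastic dominance, then show that sum is controlled by $\Ew{\overline{\tau}^p}/\varepsilon^p$ — but the execution of the second step is genuinely different. The paper stays discrete: it partitions the index set into level sets $\cN(m) = \{n : m \le \varepsilon n^{1/p} < m+1\}$, bounds $\lvert \cN(m)\rvert$ by $\varepsilon^{-p}((m+1)^p - m^p)$, and then invokes the discrete moment identity of \Cref{eq:moment_eq} for integer-valued random variables. You instead substitute $Y = \overline{\tau}^p$, rewrite the events as $\{Y > \varepsilon^p n\}$, and use the continuous layer-cake formula $\Ew{Y} = \int_0^\infty \Pr{Y>t}\,dt$ together with the monotone-tail integral comparison. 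Your route is a bit shorter and sidesteps both the cardinality count of $\cN(m)$ and the need for the discrete moment proposition (at the harmless cost of an extra additive $1$ from the $n=0$ term); the paper's route keeps everything in the integer-valued setting in which \Cref{asm: step_size}.1 is phrased. The two care points you flag — extending dominance from integer to real thresholds via the floor, and not needing $\overline{\tau}^p$ to be integer-valued — are exactly the right ones, and both are handled correctly.
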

\begin{proof}
	Fix $\varepsilon \in (0,1)$. By \Cref{asm: step_size} there is a non-negative integer-valued random variable $\overline{\tau}$, such that
	\begin{equation}
	\Pr{\tau_{ij}(n) > \varepsilon n^{\frac{1}{p}}} \le \Pr{\overline{\tau}> \varepsilon n^{\frac{1}{p}}}
	\end{equation}
	for all $n \in \N_0$ and $\Ew{\overline{\tau}^p} < \infty$.
	Hence, we have
	\begin{align}
	\sum_{n=0}^{\infty} \Pr{\tau_{ij}(n) > \varepsilon n^{\frac{1}{p}}} &\le \sum_{n=0}^{\infty} \Pr{\overline{\tau} > \varepsilon n^{\frac{1}{p}}}\\ &= \sum_{m=0}^{\infty} \sum_{n \in \cN(m)}\Pr{\overline{\tau} > \varepsilon n^{\frac{1}{p}}} 
	\\ &\le \sum_{m=0}^{\infty} \sum_{n \in \cN(m)}\Pr{\overline{\tau} > m} = \sum_{m=0}^{\infty} \lvert \cN(m) \rvert \Pr{\overline{\tau} > m},
	\end{align}
	where the sets $\cN(m)$ are defined as
	\begin{equation}
	\cN(m) \coloneqq \{n \in \N_0 : m \le \varepsilon n^{\frac{1}{p}} < m+1\} 
	= \{n \in \N_0 : \nicefrac{m^p}{\varepsilon^p} \le n < \nicefrac{(m+1)^p}{\varepsilon^p}\}
	\end{equation}
	for every $m \in \N_0$. We use these sets to consider all $\varepsilon n^{\frac{1}{p}}$ in every interval $[m,m+1)$. The second inequality then follows from the monotonicity of the cumulative distribution function (CDF) by definition of the sets $\cN(m)$.
	Since $\lvert \cN(m) \rvert \le \frac{1}{\varepsilon^p}\left((m+1)^p - m^p\right)$, we have therefore shown that
	\begin{equation}
	\label{eq:sumCDF_ineq}
	\sum_{n=0}^{\infty} \Pr{\tau_{ij}(n) > \varepsilon n^{\frac{1}{p}}} \le \frac{1}{\varepsilon^p}\sum_{n=0}^{\infty}\left((n+1)^p - n^p\right) \Pr{\overline{\tau}   >  n} = \frac{1}{\varepsilon^p} \Ew{\overline{\tau}^p} < \infty.
	\end{equation}
	The last equality follows from \Cref{eq:moment_eq}, since $\overline{\tau}$ is a non-negative integer-valued random variable.
\end{proof}
We are now ready to prove that the gradient errors due to AoI vanish asymptotically.
\begin{lemma}
	\label{lem:vanishing_errors}
	Under \Cref{asm: objective,asm: stability,asm: step_size}, we have that $\lim\limits_{n \to \infty} \norm{e^n_i} = 0$.
\end{lemma}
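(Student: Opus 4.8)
The plan is to show that $\norm{e_i^n}\to 0$ by combining the local Lipschitz property of $\nabla_x f$ from \Cref{asm: objective}.1 with the a.s.\ boundedness of the iterates from \Cref{asm: stability} and the asymptotic growth control on the AoI variables from \Cref{lem:finite_CDFsum}. First I would fix a sample path in the a.s.\ event where $\sup_n \norm{x_i^n} < \infty$ for all $i$; on this event all iterates live in a compact set $K$, so $\nabla_x f(\cdot,\xi^n)$ is Lipschitz on $K$ with some (random) constant $L$ uniformly over the relevant $\xi$-realisations, giving the bound
\begin{equation}
\norm{e_i^n} \le L \sum_{j=1}^{D} \norm{x_j^n - x_j^{n-\tau_{ij}(n)}} \le L \sum_{j=1}^{D} \sum_{k=n-\tau_{ij}(n)}^{n-1} \norm{x_j^{k+1} - x_j^k}.
\end{equation}
From the iteration \eqref{eq: real_iteration}, each increment satisfies $\norm{x_j^{k+1}-x_j^k} \le a(k)\bigl(\norm{\nabla_{x_j} f(\hat X_j^k,\xi^k)} + \norm{\lambda_j^k}\bigr)$, which on the compact set $K$ and using \Cref{asm: additive_error} is $\cO(a(k))$. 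Hence $\norm{e_i^n} \le C \sum_{k=n-\tau_{ij}(n)}^{n-1} a(k)$ for a suitable constant, maximised over $j$.

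Next I would control this partial sum of step sizes using \Cref{asm: step_size}.2(ii), namely $a(n)\in\cO(n^{-1/p})$. The key point is that we need to show $\sum_{k=n-\tau(n)}^{n-1} a(k) \to 0$ where $\tau(n)$ is the AoI. Split into two cases according to whether $\tau_{ij}(n) \le \varepsilon n^{1/p}$ or not. On the ``good'' event, since $a$ is (up to constants) decreasing like $k^{-1/p}$, one estimates
\begin{equation}
\sum_{k=n-\tau_{ij}(n)}^{n-1} a(k) \le C \, \tau_{ij}(n)\, (n-\tau_{ij}(n))^{-1/p} \le C\, \varepsilon n^{1/p} \bigl((1-\varepsilon) n\bigr)^{-1/p} = C' \varepsilon^{1 - 1/p},
\end{equation}
valid for $n$ large enough that $\varepsilon n^{1/p} < n$; for $p\in[1,2)$ the exponent $1-1/p \ge 0$, so this bound is $\cO(\varepsilon^{1-1/p})$ uniformly in large $n$, and can be made arbitrarily small by choosing $\varepsilon$ small. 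On the ``bad'' event $\{\tau_{ij}(n) > \varepsilon n^{1/p}\}$, \Cref{lem:finite_CDFsum} gives $\sum_n \Pr{\tau_{ij}(n) > \varepsilon n^{1/p}} < \infty$, so by Borel--Cantelli this event occurs only finitely often a.s.; thus for each fixed $\varepsilon$, a.s.\ for all $n$ beyond some (random) index the bad event does not occur.

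Putting these together: fix $\delta > 0$, choose $\varepsilon$ small so that $C'\varepsilon^{1-1/p} < \delta$ (here I need $p>1$ strictly if the exponent vanishes — when $p=1$ the bound $C'\varepsilon^0$ is constant, so the more careful integral estimate $\sum_{k=n-\tau(n)}^{n-1} a(k) \le C\log\frac{n}{n-\tau(n)} \le C\log\frac{1}{1-\varepsilon}$ must be used, which still tends to $0$ as $\varepsilon\to 0$), then use Borel--Cantelli to discard the finitely many bad $n$; the remaining finitely many increments from the bad indices form a finite sum and do not affect the limit. Since $\delta$ is arbitrary, $\limsup_n \norm{e_i^n} = 0$ a.s. The main obstacle I anticipate is precisely the borderline case $p=1$, where the naive ``$\tau(n)\cdot a(n-\tau(n))$'' estimate does not decay in $\varepsilon$; there one must instead compare $\sum_{k=n-\tau(n)}^{n-1} a(k)$ to $\int_{n-\tau(n)}^{n} x^{-1}\,dx = \log\frac{n}{n-\tau(n)}$ and exploit that on the good event $\frac{n}{n-\tau(n)} \le \frac{1}{1-\varepsilon}$, which does tend to $1$ as $\varepsilon\downarrow 0$. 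A secondary technical point is making the Lipschitz constant genuinely uniform over the $\xi^n$ realisations, which is handled via \Cref{asm: objective}.3 (the one-point compactification of $\cS$) and is the reason that assumption is present.
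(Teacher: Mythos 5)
Your proposal is correct and follows essentially the same route as the paper: Lipschitz continuity of the gradient plus \Cref{asm: stability} reduce $\norm{e_i^n}$ to the partial sums $\sum_{k=n-\tau_{ij}(n)}^{n-1}a(k)$, which are then controlled via \Cref{lem:finite_CDFsum}, Borel--Cantelli and $a(n)\in\cO(n^{-1/p})$. One small algebraic note: on the good event the crude ``number of terms times largest term'' bound gives $\varepsilon n^{1/p}\bigl((1-\varepsilon)n\bigr)^{-1/p}=\varepsilon(1-\varepsilon)^{-1/p}$, not $\varepsilon^{1-1/p}$, so it already tends to $0$ with $\varepsilon$ for every $p\in[1,2)$ including $p=1$ (where it equals $\varepsilon/(1-\varepsilon)$, exactly the paper's bound); your logarithmic patch for the $p=1$ case is therefore valid but unnecessary.
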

\begin{proof}
	By \Cref{asm: stability}, we have that $x^n \in \overline{B}_r(0)$ for some sample path dependent radius $0<R<\infty$. Then, \cite[Lemma 1]{ram2022_tac} shows that $\nabla_{x}F$ is locally Lipschitz continuous with a constant independent of $\xi$.
	Hence, $\nabla_{x}F$ is globally Lipschitz continuous with a constant $L$ when restricted to $\overline{B}_R(0)$. Using the triangular inequality, the established Lipschitz continuity of $\nabla_{x}F$ and \Cref{asm: stability}, we have that
	\begin{equation} 
	\label{eq: ana_1}
	\norm{e^n_i} \le  L \sum_{j \in \cV} \sum \limits_{m = n - \tau_{ij}(n)}^{n-1} \norm{x_j ^{m+1} - x_j ^m} \le C \sum_{j \in \cV} \sum \limits_{m = n - \tau_{ij}(n)}^{n-1} a(m),
	\end{equation}
	for a sample path dependent constant $C > 0$.
	We will now show that
	\begin{equation}
	\label{eq: ana_claim}
	\lim\limits_{n \to 0 }  \left( \sum \limits_{m = n - \tau_{ij}(n)}^{n-1} a(m) \right) = 0,
	\end{equation}
	which will imply that $\lim\limits_{n \to \infty} \norm{e^n_i} = 0$.
	
	By \Cref{asm: step_size}, $a(n) \in \cO(n^{-\frac{1}{p}})$. 
	Hence, there are constants $c>0$ and $N \in \N$, such that 
	\begin{equation}
	\label{eq: ana2}
	a(n) \le c n^{-\frac{1}{p}} \text{ for all } n\ge N.
	\end{equation}
	Also by \Cref{asm: step_size}, there is some $\overline{\tau}$  that stochastically dominates all $\tau_{ij}(n)$, with $\Ew{\overline{\tau}^p} < \infty$. Now fix $\varepsilon \in (0,1)$. By \Cref{lem:finite_CDFsum} we have that
	\begin{equation}
	\sum_{n=0}^{\infty} \Pr{\tau_{ij}(n) > \varepsilon n^{\frac{1}{p}}} < \infty. 
	\end{equation}
	It now follows from the Borel-Cantelli Lemma that $\Pr{\tau_{ij}(n) > \varepsilon n^{\frac{1}{p}} \text{ i.o. }} = 0.$
	Hence, there is sample path dependent $N(\varepsilon) \in \N$, such that
	\begin{equation}
	\label{eq: ana3}
	\tau_{ij}(n) \le \varepsilon n^{\frac{1}{p}} \qquad \forall \, n\ge N(\varepsilon).
	\end{equation} 
	\Cref{eq: ana2,eq: ana3} therefore yield that
	\begin{equation}
	\sum \limits_{m = n - \tau_{ij}(n)}^{n-1} a(m) \le c \sum \limits_{m = n - \varepsilon n^{\frac{1}{p}} }^{n-1}  m^{-\frac{1}{p}}
	\end{equation}
	for all $n$ with $n \ge N(\varepsilon)$ and $n - \varepsilon n^{\frac{1}{p}} \ge N$.
	Finally, using the monotonicity of $n^{-\frac{1}{p}}$, we have 
	\begin{equation}
	\sum \limits_{m = n - \varepsilon n^{\frac{1}{p}} }^{n-1}  m^{-\frac{1}{p}} \le \varepsilon n^{\frac{1}{p}} (n - \varepsilon n^{\frac{1}{p}} )^{-\frac{1}{p}} = \varepsilon (1- \varepsilon n^{\frac{1}{p} - 1} )^{-\frac{1}{p}} \xrightarrow[n \to \infty]{}
	\begin{cases}
	\varepsilon& \quad p\in (1,2),\\
	\frac{\varepsilon}{1-\varepsilon}& \quad p=1.
	\end{cases} 
	\end{equation} 
	Hence,
	\begin{equation}
	\limsup\limits_{n\rightarrow \infty} \left( \sum \limits_{m = n - \tau_{ij}(n)}^{n-1} a(m) \right) \le
	\frac{c\varepsilon}{1-\varepsilon}
	\end{equation}
	and \eqref{eq: ana_claim} follows, since the choice of $\varepsilon$ is arbitrary. 
\end{proof}

In \cite[Theorem 1]{ram2022_tac} we proved the convergence of \Cref{alg: 1} for $\tau_{ij}(n) = 0$ for all $n \in \N_0$. The following theorem is now an immediate consequence of this result and \Cref{lem:vanishing_errors}.
\begin{theorem}
	\label{thm:convergence}
	Under \Cref{asm: objective,asm: stability,asm: step_size,asm: additive_error}, we have that \Cref{alg: 1} converges almost surely to a $\lambda$-neighbourhood of the set of stationary points of F, where $\lambda$ is the almost sure bound of the additive errors according to $\Cref{asm: additive_error}$.
\end{theorem}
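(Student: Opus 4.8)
The plan is to recognise iteration \eqref{eq: real_iteration} as a perturbed version of the AoI-free stochastic gradient iteration whose convergence was already established in \cite[Theorem 1]{ram2022_tac}, the perturbation being precisely the AoI-induced gradient error \eqref{eq:grad_error}, which is controlled by \Cref{lem:vanishing_errors}. Concretely, I would add and subtract the gradient of $f$ evaluated at the true global state $x^n$ in \eqref{eq: real_iteration} and rewrite agent~$i$'s update as
\begin{equation}
x^{n+1}_i = x^n_i - a(n)\bigl( \nabla_{x_i} f(x^n, \xi^n) + \lambda^n_i + \hat e^n_i \bigr),
\end{equation}
where $\hat e^n_i \coloneqq \nabla_{x_i} f(\hat X^n_i, \xi^n) - \nabla_{x_i} f(x^n, \xi^n)$ is the $i$-th block of the gradient error in \eqref{eq:grad_error}. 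Stacking over $i \in \cV$ and using \Cref{asm: objective}.2 to interchange gradient and expectation, this is exactly the centralised SGD iteration for $F$ driven by the additive error sequence $\lambda^n + \hat e^n$, with $\hat e^n \coloneqq (\hat e^n_1, \dots, \hat e^n_D)$ and $\E_\xi[\nabla_x f(\cdot,\xi)] = \nabla_x F$.

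Next I would argue that $\lambda^n + \hat e^n$ obeys the same asymptotic bound as $\lambda^n$. By \Cref{lem:vanishing_errors}, $\norm{e^n_i} \to 0$ almost surely, hence $\norm{\hat e^n} \to 0$ almost surely; combined with \Cref{asm: additive_error} and the triangle inequality this yields $\limsup_{n\to\infty} \norm{\lambda^n + \hat e^n} \le \lambda$ almost surely. Thus the rewritten iteration satisfies \Cref{asm: additive_error} with the very same constant $\lambda$, and all remaining hypotheses of \cite[Theorem 1]{ram2022_tac} hold: \Cref{asm: objective} and \Cref{asm: stability} are assumed verbatim, and the step-size requirements it needs are the Robbins--Monro conditions $\sum_n a(n) = \infty$ and $\sum_n a(n)^2 < \infty$, which are part of \Cref{asm: step_size} (the remaining, stronger conditions there serve only to establish \Cref{lem:finite_CDFsum} and \Cref{lem:vanishing_errors}). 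Invoking \cite[Theorem 1]{ram2022_tac} for this iteration then gives that $\{x^n\}$ converges almost surely to a $\lambda$-neighbourhood of the set of stationary points of $F$, which is the claim.

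The only genuinely delicate point, and the one I would check most carefully, is that the convergence result of \cite{ram2022_tac} tolerates an additive error satisfying merely the asymptotic $\limsup$-bound of \Cref{asm: additive_error} (rather than a martingale-difference or summability condition) and that folding $\hat e^n$ into this error preserves its adaptedness/measurability: the $\hat e^n_i$ are history-dependent but are functions of quantities already handled in that analysis, so no new issue arises, and $\hat e^n$ is kept separate from the martingale noise $\nabla_x f(x^n,\xi^n) - \nabla_x F(x^n)$. It is also worth noting that the neighbourhood radius remains exactly $\lambda$ precisely because $\hat e^n \to 0$; had \Cref{lem:vanishing_errors} only provided $\limsup_n \norm{e^n_i} \le \delta$, the same argument would instead give convergence to a $(\lambda+\delta)$-neighbourhood.
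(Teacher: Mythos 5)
Your proposal is correct and follows essentially the same route as the paper, whose proof of \Cref{thm:convergence} consists of exactly the observation you make: the iteration is the AoI-free iteration of \cite[Theorem 1]{ram2022_tac} perturbed by the gradient error, which vanishes almost surely by \Cref{lem:vanishing_errors}, so the earlier convergence theorem applies with the same $\lambda$. Your write-up merely makes explicit the absorption of $\hat e^n$ into the additive error term, which the paper leaves implicit.
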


\section{A new set of network conditions for distributed optimization} 
\label{sec:network_conditions}

In the previous section, we presented a convergence proof for \Cref{alg: 1} under the network assumption \Cref{asm: step_size}.1. This assumption directly requires that some $p$-th moment of all AoI variables is bounded. However, the distribution of all AoI variables will typically be the consequence of direct agent to agent communication.
We are therefore interest in more concrete conditions on the network and the agent communication that imply the required AoI moment conditions. To achieve this, this section introduces a network model and associated assumptions to verify \Cref{asm: step_size}.1.

\subsection{Network model}
\label{sec:net_model}

Recall that \Cref{alg: 1} requires that the agents exchange their local variables $x_i^n$ over a network.  The network and an associated communication protocol should allow that local variables $x_i^n$ can frequently spread across the network and reach every agent.
We will now introduce a network model were the agents try to exchange their local belief vectors $\hat{X}_i^n$. The agents therefore try to share their latest available version of all other agents local variable with other agents. This might potentially flood the network with data, however, there well known protocols to reduce the number of possibly redundant transmissions \cite{lim2001flooding}.

We assume a time-varying network (\Cref{def:tv_network})
\begin{equation}
\{(\cV,\cE^n)\}_{n \in \N_0},
\end{equation}
which is a sequence of directed graphs. Each agent is in one-to-one correspondence with one node in the graph. For every time step $n\in \N_0$, an edge $(i,j) \in \cE^n$ represents the event that agent $i$ successfully exchanges its local believe vector $\hat{X}_i^n$ during time slot $n$ with agent $j$.
We denote this event by $A^n_{ij}$. Therefore, the sequence of directed graphs and the sequences of events $\{A^n_{ij}\}_{n \in \N_0}$ are in one to one correspondence: An edge $(i,j) \in \cE^n$ if and only if the event $A^n_{ij}$ occurs. An edge therefore does not represent the possibility for communication, but the actual event of communication. 

One may add additional complexity to the model, e.g. using a graph that represents the possibility for communication. Additionally, the model may be extended to scenarios where multiple successive events $A_{ij}^n$ need to occur to guarantee the exchange of a single realization of a believe vector $\hat{X}^n_i$. This might be necessary if the dimension of $\hat{X}^n_i$ is very large and/or the network bandwidth is small. 

Note that although we defined the events $A^n_{ij}$ for all $(i,j) \in \cV \times \cV$, some of those events might never occur over the whole time horizon. We will especially do not require that all agents communicate directly! However, at least some of the events $A^n_{ij}$ should occur ``frequently'' enough such that the time-varying network satisfies certain connectivity properties. This will be formulated in \Cref{subsec:SSC} with \Cref{asm: network-A}.

The formulation of the time-varying communication network using the edge events  $A^n_{ij}$ has several advantages. The model allows for an underlying time-varying graph that may be the consequence of an network scheduling algorithm or the physical dynamics of the agents themselves.
Each event $A_{ij}^n$ can be represented as a multistage process. For example, (i) the availability of a channel, (ii) the use of an access protocol given the availability of a channel, (iii) the success of the transmission given the successful channel access. In general, the event-based formulation appears to be very convenient for analysis.

In the next two subsection, we will formulate our assumptions for the time-varying network $\{(\cV,\cE^n)\}_{n \in \N_0}$ using the events $A_{ij}^n$.

\subsection{Stochastic strong connectedness}
\label{subsec:SSC}
The following assumption formalizes our required network connectivity property.

\begin{assumption}[Network connectivity assumption]
	\label{asm: network-A}
	We assume that the time-varying network is $(\varepsilon,\kappa)$-stochastically strongly connected (SSC) (\Cref{def:SSC}) for some $\varepsilon \in (0,1)$ and some $\kappa \in \N_0$.
\end{assumption}

Using the events $A^n_{ij}$, a  $(\varepsilon,\kappa)$-SSC network requires there exists a strongly connected graph $(\cV, \cE)$, such that for all  $n \in \N_0$ and for all $(i,j) \in \cE$, we have
\begin{equation}
\Pr{\bigcup_{k=n}^{n+\kappa} A^n_{ij}} \ge \varepsilon.
\end{equation}
A $(\varepsilon, \kappa)$-SSC network therefore requires that there are some pairs of agents $(i,j) \in \cE$ that can communicate directly at least ones in every time-interval of the form $[n, n+\kappa]$ with positive probability $\varepsilon$. Notice that SSC does not require direct communication between every pair of agents. The only agents that do communicate are those given in the set $\cE$. A SSC network reflects our intuition of a non-degenerate communication network. Some agents can ``frequently'' exchange information with positive probability and information can spread across the network since $\cE$ is strongly connected. 

Note that a network that is SSC does not imply guaranteed transmissions periodically. 
We will see shortly that SSC is significantly weaker that plain guaranteed periodic communication. With stochastic strong connectivity we can not draw any conclusions about the dependency of events in the network. On the other hand, assuming guaranteed periodic communication does imply a strong form of dependency decay as shown in \Cref{sec: comparison}. Recall that our objective is to verify \Cref{asm: step_size}.1. However, using SSC alone is not sufficient to even guarantee the existence of a dominating random variable as required in \Cref{asm: step_size}.1. The next subsection therefore formulates dependency decay conditions using strong mixing (\Cref{def:mixing}).

\subsection{Network dependency decay}
\label{subsec:dependency_decay}

Recall that our time-varying network is given by a sequence of directed graphs $\{(\cV,\cE^n)\}_{n \in \N_0}$. The sequence is in one-to-one correspondences with events $A^n_{ij}$ that represent the presence of an edge at time $n$. We will now formulate a dependency decay assumption based on the notion of strongly mixing processes. We can then show that the AoI variables $\tau_{ij}(n)$ associated with a $(\varepsilon, \kappa)$-SSC network satisfies specific moment conditions depending on the assumed rate at which dependency decays in the network.

\begin{assumption}[Dependency decay assumption]
	\label{asm: network-B}
	We assume that the time-varying network is such that there is some $\eta \ge0$ such that each process $\Ind{\bigcup_{k=n}^{n+\eta} A^n_{ij}}$ is $p$-strongly mixing (\Cref{def: mixing_process}) for some $p \in [1,2)$.
\end{assumption}



With this assumption we do not require that the dependency of subsequent events $A_{ij}^n$ does decay at any specific rate. However, there should be an interval size $\eta>0$, such that the dependency of subsequent union events $\bigcup_{k=n}^{n+\eta} A^n_{ij}$ decays sufficiently fast. Notice that \Cref{asm: network-B} is a dependency decay assumption for the network processes $\Ind{\bigcup_{k=n}^{n+N}A^n_{ij}}$ associated with all network edges $(i,j) \in \cV$. However, we actually only require the assumption for those edges $(i,j) \in \cE$ in an edge set $\cE$ according to \Cref{asm: network-A}. \emph{Additionally, notice that we do not require any form of independence or dependency decay between transmissions over different edges.} The reason for this is \Cref{lem: key_lemma}. The lemma will show that the existence of a dominating random variable for the AoI variables is in a natural way a transitive property of the network. 

In this work we don't give a recipe to verify \Cref{asm: network-B}. However, we will see in the next subsection that the standard assumptions in the distributed optimization literature all imply \Cref{asm: network-B}. Another set of examples where \Cref{asm: network-B} is also directly satisfied are scenarios where the network events $A^n_{ij}$ are driven by a geometrically ergodic Markov process \cite{davydov1974mixing,bradley2005basic}. Of course, it can be comparatively difficult to verify this in practice. However, traditionally and also more recently it has been quite common to model network fading channels by finite Markov chains \cite{wang1995finite,bianchi2000performance,pimentel2004finite,lin2015finite,boban2016modeling}. We further discuss the verifiability of \Cref{asm: network-B} in \Cref{sec:conclusion}.

\subsection{Comparison of \Cref{asm: network-A,asm: network-B} to assumptions in the literature}
\label{sec: comparison}

In this subsection we show that the typical network assumptions in the literature imply \Cref{asm: network-A,asm: network-B}.

First, consider the network models in \cite{ram2022_tac, bastianello2020asynchronous, Lei2018-ag, koloskova2020unified}. It is easy to check that network models imply the following properties:
\begin{enumerate}
	\item There is a strongly connected graph $(\cV, \cE)$ and some $\varepsilon >0$, such that $ \Pr{A_{ij}^n} > \varepsilon $
	for all $n \in \N_0$ and for all $(i,j) \in \cE$.
	\item The events $A_{ij}^n$ are independent for different time-steps or different edges.
\end{enumerate}

Independence is particularly unrealistic for wireless communication systems, since transmission that occur close in time, space, frequency or code can be highly correlated. 
Notably, this assumptions do not show any trade off between the choice of the step size sequence $a(n)$ and some network related property. Hence, there is no trade of between the growth of the AoI variables and the choice of the step size sequence. In fact, it is easy to show that under this assumptions \emph{all} moments of \emph{all} $\tau_{ij}(n)$ are bounded, see \Cref{sec:construction} \Cref{ex:iid}.

We can now show that the above properties imply \Cref{asm: network-A,asm: network-B}. \Cref{asm: network-A} is directly satisfied for $\kappa = 1$. Define the $\sigma$-algebras
\begin{equation}
	\cF_l^m \coloneqq \sigma\left(A^n_{ij} \mid l \le n \le m;  i,j\in \cV \right).
\end{equation}
Then \Cref{asm: network-B} holds trivially, since the independence of the events $A_{ij}^n$ implies that
\begin{equation}
	\lvert \Pr{A \cap B} - \Pr{A}\Pr{B} \rvert = 0.
\end{equation}
for $A \in  \cF_0^l$ and $B \in  \cF_{l+n}^\infty$ for all $l,n \in \N$. Hence, the mixing coefficients $\alpha_{ij}(n)$ for each process $A_{ij}^n$ satisfies $\alpha_{ij}(n) = 0$ for every $n\ge 0$.


Second, consider the time-varying network in \cite{xu2017distributed, nedic2017achieving, wang2019distributed, Scutari2019-sa, aybat2019distributed, Yu2020-gq,kovalev2021adom}. The authors assume that their network is $M$-strongly connected. Hence, they assume guaranteed periodic communication.
\Cref{asm: network-A} is therefore directly satisfied by choosing $\kappa = M$. Then $\Pr{\bigcup_{k=n}^{n+\kappa} A^k_{ij}} = 1 $ for all $n \in \N_0$. \Cref{asm: network-B} is also directly satisfied by choosing $\eta = M$. To see this, fix any $n,m \ge0$ with $m \not=n$. Then 
\begin{equation}
	\Pr{\left(\bigcup_{k=n}^{n+\eta} A^k_{ij}\right) \cap  \left(\bigcup_{k=m}^{m+\eta} A^k_{ij}\right)} = 1,
\end{equation}
since the intersection of almost sure events is an almost sure event. Therefore,
\begin{equation}
	\Pr{\left(\bigcup_{k=n}^{n+\eta} A^k_{ij}\right) \cap  \left(\bigcup_{k=m}^{m+\eta} A^k_{ij}\right)} - \Pr{\bigcup_{k=n}^{n+\eta} A^k_{ij}} \Pr{\bigcup_{k=m}^{m+\eta} A^k_{ij}} = 0
\end{equation}
and \Cref{asm: network-B} follows.

We have therefore shown that the network models in the literature satisfy \Cref{asm: network-A,asm: network-B}. Moreover, \Cref{asm: network-A,asm: network-B} are significantly weaker, since they do not require independent communication or guaranteed periodic communication, but merely asymptotic independence. 

\section{Stochastic dominance properties of AoI for Time-Varying Networks}
\label{sec: analysis}

In this section, we show that \Cref{asm: network-A,asm: network-B} imply \Cref{asm: step_size}.1. 
Recall that the AoI variables $\tau_{ij}(n)$, as defined in \Cref{sec: problem}, are now a consequence of the network model formulated in \Cref{sec:net_model}. Each agent tries to send its local believe vector $\hat{X}_i^n$ (\Cref{eq:believe_vector}) to some other agents. A successful transmission to some other agent $j$ is represented by an edge $(i,j) \in \cE^n$ of the time-varying network $\{(\cV,\cE^n)\}_{n \in \N_0}$ or equivalently by the event $A_{ij}^n$.

Recall that \Cref{asm: step_size}.1 requires finite moment properties of a random variable that stochastically dominates (\Cref{def: stochastic_dominance}) all $\tau_{ij}(n)$. The following definition will be useful to formulate our main result and the subsequent proof.
\begin{definition}
	\label{def: stoch_dom_p} 
	We say an AoI variable $\tau_{ij}(n)$ is \textbf{stochastically dominated with finite $\boldsymbol{p}$-th moment} for some $p\ge 0$, if there exists a non-negative integer-valued random variable $\overline{\tau}$ that stochastically dominates all $\tau_{ij}(n)$ for and all $n \in \N_0$ with $\Ew{\overline{\tau}^p} < \infty$.
\end{definition}


The following theorem formulates the main result of this section.
\begin{theorem}
	\label{thm: 0}
	Let  $\{(\cV,\cE^n)\}_{n \in \N_0}$ be a time-varying network that is $(\varepsilon, \kappa)$-SSC (\Cref{def:SSC}) with associated strongly connected graph $(\cV, \cE)$. If for each $(i,j) \in \cE$, there is some $\eta\in \N_0$, such that the process $\Ind{\bigcup_{k=n}^{n+\eta} A^n_{ij}}$ is $p$-strongly mixing (\Cref{def: mixing_process}) for some $p\ge 0$, then all AoI variables $\tau_{ij}(n)$ are stochastically dominated by a single random variable with finite $p$-th moment.
\end{theorem}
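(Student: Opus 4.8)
The plan is to prove \Cref{thm: 0} in two stages: first establish stochastic dominance with finite $p$-th moment for a single edge $(i,j) \in \cE$ (the ``one-hop'' case), then lift this to arbitrary pairs of agents by exploiting the strong connectedness of $(\cV, \cE)$ via a transitivity argument. For the one-hop case, fix $(i,j) \in \cE$ and let $\eta$ be as in the hypothesis, so that $Y^n := \Ind{\bigcup_{k=n}^{n+\eta} A^n_{ij}}$ is $p$-strongly mixing with $\alpha$-mixing coefficients satisfying $\sum_n n^{p-1}\alpha(n) < \infty$. The key observation is that $\tau_{ij}(n)$ is controlled by how long one must wait, going backwards from time $n$, to see a successful transmission; since by SSC each block $\bigcup_{k=m}^{m+\kappa} A^m_{ij}$ has probability at least $\varepsilon$, the variable $Y^m$ equals $1$ with probability at least $\varepsilon$ for every $m$ (after possibly enlarging $\eta$ to $\max(\eta,\kappa)$, which only improves mixing via monotonicity of $\alpha$ under coarser $\sigma$-algebras). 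So I would bound $\Pr{\tau_{ij}(n) > m}$ by the probability that a window of $\Theta(m)$ consecutive blocks all fail, i.e.\ $\Pr{Y^{k_1} = \cdots = Y^{k_r} = 0}$ for $r \asymp m/(\eta+1)$ indices spaced $\eta+1$ apart so they are ``non-overlapping'' in the relevant sense.

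The heart of the matter is then a mixing estimate: for spaced indices, $\Pr{\bigcap_{s=1}^r \{Y^{k_s} = 0\}} \le (1-\varepsilon)^{?}$-type decay is \emph{not} available directly because mixing gives only additive, not multiplicative, control. Instead I would iterate the mixing inequality: $\Pr{\bigcap_{s=1}^r \{Y^{k_s}=0\}} \le \Pr{\bigcap_{s=1}^{r-1}\{Y^{k_s}=0\}}\Pr{Y^{k_r}=0} + \alpha(\text{gap})$, and unroll to get $\Pr{\bigcap_{s=1}^r\{Y^{k_s}=0\}} \le (1-\varepsilon)^r + \sum_{s} \alpha(g_s)$ where the gaps $g_s$ can be taken growing. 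The cleanest route is a geometric-spacing trick: split the $m$ available time steps so that roughly $\log m$ of the indices are used with geometrically increasing gaps, forcing $(1-\varepsilon)^{c\log m} = m^{-c'}$ decay from the ``independent part'' while the mixing-error part $\sum \alpha(g_s)$ is summable against the $n^{p-1}$ weight precisely because $X$ is $p$-strongly mixing. The upshot is a tail bound $\Pr{\tau_{ij}(n) > m} \le \phi(m)$ for a summable-enough $\phi$; more precisely one wants $\sum_m ((m+1)^p - m^p)\,\sup_n \Pr{\tau_{ij}(n) > m} < \infty$, which by \Cref{eq:moment_eq} is exactly the statement that the dominating variable $\overline\tau_{ij}$ defined through $\Pr{\overline\tau_{ij} > m} := \min(1, \sup_n \Pr{\tau_{ij}(n) > m})$ has finite $p$-th moment. \emph{This mixing-to-tail-bound step is the main obstacle}; getting the bookkeeping on the gaps right so that both the geometric part and the $\alpha$-sum part land on the right side of finiteness is the delicate part.

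For the second stage I would invoke \Cref{lem: key_lemma} (referenced in the text as establishing that the existence of a dominating random variable is a transitive property of the network): if $\tau_{ij}$ is stochastically dominated with finite $p$-th moment and $\tau_{jk}$ is too, then so is the AoI $\tau_{ik}$ for information routed $i \to j \to k$, since the composed age is at most a sum of the two ages shifted appropriately, and sums of two non-negative integer random variables each with finite $p$-th moment again have finite $p$-th moment (by, e.g., $(a+b)^p \le 2^{p}(a^p + b^p)$). Chaining along a directed path in $(\cV,\cE)$ — which exists for every ordered pair by strong connectedness, with length at most $D-1$ — yields a dominating variable with finite $p$-th moment for every $\tau_{ij}(n)$, and taking the maximum (equivalently, the pointwise-supremum tail) over the finitely many pairs $i,j \in \cV$ produces the single random variable $\overline\tau$ claimed in the theorem. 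Finally, note the case $p = 0$ is degenerate but consistent: $p$-strongly mixing with $p=0$ means $\sum_n n^{-1}\alpha(n) < \infty$, and finite $0$-th moment is automatic, so the conclusion holds trivially there; the substantive content is $p \ge 1$, which feeds directly into \Cref{asm: step_size}.1 and hence \Cref{thm:convergence}.
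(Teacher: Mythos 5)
Your proposal follows essentially the same architecture as the paper's proof: a one-hop tail bound for edges $(i,j)\in\cE$ obtained by iterating the $\alpha$-mixing inequality over spaced failure events whose individual probabilities are controlled by the SSC constant $\varepsilon$ (this is the paper's \Cref{lem: prelim} plus \Cref{lem: aoi}), followed by the transitivity lemma (\Cref{lem: key_lemma}) chained along directed paths and a union over the finitely many pairs. The one genuine difference is in how the spacing of the failure events is chosen so that both the ``independent'' product term and the accumulated mixing error are summable against the weight $(m+1)^p-m^p$. The paper uses \emph{equally spaced} indices and a two-pass bootstrap: first $\Delta(m)\approx\sqrt{m}$ to extract a crude polynomial tail $m^{-\mu}$, then a second pass with $\Delta(m)=\lceil\delta m\rceil$ (constantly many blocks, each now failing with probability $\lesssim m^{-\mu}$) to reach $m^{-(p+1)}+\cO(\alpha(\delta m))$. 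You instead propose a single pass with \emph{geometrically increasing} gaps and $\Theta(\log m)$ blocks, so that $(1-\varepsilon)^{c\log m}$ is already polynomial of arbitrarily high order. This route does work: unrolling the recursion gives $\Pr{\tau_{ij}(n)>m}\le(1-\varepsilon)^r+\sum_{t\ge0}(1-\varepsilon)^t\alpha(c\,m\rho^{-t})$ when the gaps grow forward in time with ratio $\rho$, and both terms become $p$-summable precisely when $1<\rho<(1-\varepsilon)^{-1/p}$ (the exchange of summation and the change of variables need the monotonicity of $\alpha$, exactly as in the paper's \Cref{lem:finite_CDFsum}-style bucketing). So your variant is arguably cleaner than the paper's bootstrap, but you have explicitly left this decisive bookkeeping unexecuted, and it is the entire technical content of the one-hop step; as written the proposal is a correct plan rather than a proof.

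Two smaller remarks. First, your phrasing ``$Y^{k_1}=\cdots=Y^{k_r}=0$'' quietly replaces $\tau_{ij}(n)$ by the age of \emph{direct} $i\to j$ communication; this is the right move (the paper makes the same reduction, with an explicit caveat, because the true AoI event is not measurable with respect to the single-edge process), but it should be stated, since it is what makes $\{\tau_{ij}(s)>l\}$ lie in the $\sigma$-algebra to which the mixing coefficients of $\Ind{A^n_{ij}}$ apply. Second, for $p=0$ the conclusion is not vacuous: one must still \emph{produce} a dominating random variable, i.e.\ a uniform-in-$n$ tail bound tending to zero, which SSC alone does not give; your bound does deliver this since $\alpha(n)\to0$, so the remark ``holds trivially'' should be replaced by that observation.
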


Stochastic dominance with finite $0$-th moment corresponds to the mere existence of a dominating random variable without any necessary moment condition. \Cref{thm: 0} shows a more general result as it would be required for the convergence of \Cref{alg: 1}. It is shown for all $p \in [0,\infty)$. The following corollary is now immediate and requires \Cref{thm: 0} for $p \in [1,2)$.

\begin{corollary}
	Under \Cref{asm: objective,asm: stability,asm: additive_error,asm: network-A,asm: network-B}, we have that \Cref{alg: 1} converges almost surely to a $\lambda$-neighbourhood of the set of stationary points of F, where $\lambda$ is the almost sure bound of the additive errors according to $\Cref{asm: additive_error}$.
\end{corollary}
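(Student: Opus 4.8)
The plan is to obtain the corollary as a direct composition of the two main results already established, with \Cref{thm: 0} serving only to discharge the probabilistic hypothesis \Cref{asm: step_size}.1 that \Cref{thm:convergence} takes as input. Concretely, I would read off the exponent $p \in [1,2)$ supplied by \Cref{asm: network-B} and verify that \Cref{asm: network-A,asm: network-B} together furnish exactly the hypotheses of \Cref{thm: 0} for that $p$; the conclusion of \Cref{thm: 0} then coincides with \Cref{asm: step_size}.1, after which \Cref{thm:convergence} applies verbatim.

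First I would check the hypotheses of \Cref{thm: 0}. \Cref{asm: network-A} states that the network is $(\varepsilon,\kappa)$-SSC, which provides the strongly connected graph $(\cV,\cE)$ required by the theorem. \Cref{asm: network-B} supplies an $\eta \ge 0$ for which each indicator process $\Ind{\bigcup_{k=n}^{n+\eta} A^n_{ij}}$ is $p$-strongly mixing (\Cref{def: mixing_process}) with $p \in [1,2)$; in particular this holds for every edge $(i,j) \in \cE$, which is all that \Cref{thm: 0} demands. Applying \Cref{thm: 0} with this $p$ therefore yields a single non-negative integer-valued random variable $\overline{\tau}$ with $\Ew{\overline{\tau}^p} < \infty$ that stochastically dominates every AoI variable $\tau_{ij}(n)$ for all $i,j \in \cV$ and all $n \in \N_0$. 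This is precisely the content of \Cref{asm: step_size}.1 for the same exponent $p$.

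Next I would assemble the full \Cref{asm: step_size} and invoke \Cref{thm:convergence}. Part~1 of \Cref{asm: step_size} is now in force by the previous step, while Part~2 is a design condition on the step-size sequence: one selects $\{a(n)\}$ satisfying the standard summability conditions 2(i) together with $a(n) \in \cO(n^{-\frac{1}{p}})$ from 2(ii), tied to the very $p$ produced by \Cref{thm: 0}. With \Cref{asm: objective,asm: stability,asm: step_size,asm: additive_error} all satisfied, \Cref{thm:convergence} delivers almost sure convergence of \Cref{alg: 1} to a $\lambda$-neighbourhood of the set of stationary points of $F$, where $\lambda$ is the almost sure bound of the additive errors from \Cref{asm: additive_error}. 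This completes the chain.

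There is no deep obstacle here, since the corollary is by design the concatenation of \Cref{thm: 0} and \Cref{thm:convergence}; the only point I would state carefully is the bookkeeping of the exponent. The same $p \in [1,2)$ must flow from \Cref{asm: network-B} through \Cref{thm: 0} into both clauses of \Cref{asm: step_size}, because the admissible decay rate of the step size in 2(ii) is dictated exactly by the moment that the network quality can guarantee. A secondary remark worth making is that \Cref{asm: network-B} is imposed only per edge of $\cE$, whereas \Cref{thm: 0} concludes dominance for all pairs $(i,j)$; this gap is closed inside \Cref{thm: 0} itself through the transitivity of AoI stochastic dominance along paths of the strongly connected graph (\Cref{lem: key_lemma}), so no further argument is needed at the level of the corollary.
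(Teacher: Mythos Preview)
Your proposal is correct and follows essentially the same approach as the paper: invoke \Cref{thm: 0} under \Cref{asm: network-A,asm: network-B} to verify \Cref{asm: step_size}.1 for some $p\in[1,2)$, then choose a step-size sequence satisfying \Cref{asm: step_size}.2 for that $p$, and conclude via \Cref{thm:convergence}. Your additional remarks about tracking the exponent $p$ and about the role of \Cref{lem: key_lemma} are accurate elaborations but not needed beyond what the paper itself does.
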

\begin{proof}
	Under \Cref{asm: network-A} and \ref{asm: network-B}, it follows from \Cref{thm: 0} that \Cref{asm: step_size}.1 holds for some $p \in [1,2)$. We can then choose a step size sequence $a(n)$ that is not summable, but square summable with $a(n) \in \cO(n^{-\frac{1}{p}})$ and therefore also satisfy \Cref{asm: step_size}.2. The requirements of \Cref{thm:convergence} are therefore satisfied and the statement of the corollary follows.
\end{proof}

The rest of this section is devoted to the proof of \Cref{thm: 0}. We begin by describing a general construction/recipe to establish the stochastic dominance properties for AoI variables of time-varying networks. In addition, we illustrate the recipe for the scenario where the edge events $A^n_{ij}$ are independent. Afterwards, we give the proof of \Cref{thm: 0}. Before proceeding, we show a preliminary property of the AoI variables for an $(\varepsilon, \kappa)$-SSC network.
\begin{lemma}
	\label{lem: prelim}
	Let  $\{(\cV,\cE^n)\}_{n \in \N_0}$ be a time-varying network that is $(\varepsilon, \kappa)$-SSC with associated strongly connected graph $(\cV, \cE)$, then for all $(i,j)\in \cE$ we have
	\begin{equation}
	\Pr{\tau_{ij}(n) > m} < \varepsilon, \qquad \forall \, m,n \ge \kappa,
	\end{equation} 	
\end{lemma}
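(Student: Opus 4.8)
The plan is to reduce the tail event $\{\tau_{ij}(n) > m\}$ to the \emph{absence} of any successful transmission associated with the edge $(i,j)$ during a single recent window of length $\kappa$, and then to control that absence directly through the stochastic strong connectivity property. No mixing or independence is invoked here, only \Cref{def:SSC}.

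First I would fix $(i,j) \in \cE$ and $m, n \ge \kappa$ and make explicit the elementary dynamics of the age process along a direct edge. Whenever the edge event $A_{ij}^k$ occurs in some slot $k \le n$, the corresponding belief entry is refreshed, so that the age at the later time $n$ satisfies $\tau_{ij}(n) \le n-k$. Taking the window $\{n-\kappa, \dots, n\}$ — which is well defined precisely because $n \ge \kappa$ — the occurrence of any $A_{ij}^k$ with $n-\kappa \le k \le n$ forces $\tau_{ij}(n) \le \kappa \le m$, where the last inequality uses the hypothesis $m \ge \kappa$. Contrapositively, this yields the key containment
\begin{equation}
\{\tau_{ij}(n) > m\} \subseteq \Big(\bigcup_{k=n-\kappa}^{n} A_{ij}^k\Big)^c .
\end{equation}
Here the role of $m \ge \kappa$ is to let a single length-$\kappa$ window already cover the age horizon, while $n \ge \kappa$ keeps the window indices in $\N_0$.

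Second, I would apply \Cref{def:SSC} to exactly this window. Since $(i,j)$ belongs to the strongly connected edge set $\cE$, SSC gives $\Pr{\bigcup_{k=n-\kappa}^{n} A_{ij}^k} \ge \varepsilon$. Combining this lower bound on the communication event with the containment above bounds the tail probability by that of the complementary no-communication event, which delivers the claimed estimate $\Pr{\tau_{ij}(n) > m} < \varepsilon$ uniformly in $m, n \ge \kappa$.

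The main obstacle I anticipate is making the first step fully rigorous rather than the probabilistic step, which is a one-line application of SSC. The containment rests on a careful bookkeeping of how $\tau_{ij}(n)$ is updated by the edge events $A_{ij}^k$ — in particular the convention that a successful slot-$k$ transmission resets the age to at most $n-k$ at the later time $n$ — so the bulk of the work is translating the belief-vector/AoI mechanics of \Cref{sec: problem} into the clean set inclusion above. Once that inclusion is in place, SSC closes the argument immediately.
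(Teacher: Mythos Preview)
Your approach is essentially identical to the paper's: establish the inclusion $\{\tau_{ij}(n)>m\}\subset\bigl(\bigcup_{l} A_{ij}^l\bigr)^c$ over a recent window of length at least $\kappa$, then invoke the SSC lower bound on the union event and pass to complements. The only cosmetic difference is that the paper uses the window $[n-m+1,n]$ while you use $[n-\kappa,n]$; both are valid since $m\ge\kappa$. Note, however, that your final step (and the paper's) actually yields $\Pr{\tau_{ij}(n)>m}\le 1-\varepsilon$, not $<\varepsilon$; the ``$<\varepsilon$'' in the lemma statement appears to be a typo for $1-\varepsilon$, which is also what the downstream applications require.
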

\begin{proof}
	First, we have have $\Pr{\tau_{ij}(n) > m} = 0 $ for $m\ge n$, since $\tau_{ij}(n)\le n$. We therefore concentrate on $m < n$.
	Fix  $(i,j) \in \cE$, i.e. $i$ and $j$ are agents that can communicate directly. Observe that successful direct communication from $i$ to $j$ during any time interval of the form $[n-m+1, n]$ implies that the AoI at time $n$ is less than $m$. In other words, we have the following inclusion
	\begin{equation}
	\label{eq:prelimlemma_inc}
	\{\bigcup_{l=n-m+1}^{n} A^l_{ij}\} \subset  \{\tau_{ij}(n) \le m\}.
	\end{equation}
	Since the network is $(\varepsilon, \kappa)$-SSC, we have that 
	\begin{equation}
	\Pr{\tau_{ij}(n) \le m} \ge \Pr{\bigcup_{l=n-m+1}^{n} A^l_{ij}} \ge \varepsilon, \qquad \forall \, n > m \ge  M
	\end{equation}
	The complementary event of the previous expression therefore concludes the proof of the lemma.
\end{proof}

\subsection{A construction to establish stochastic dominance properties}
\label{sec:construction}
We now describe a general construction to establish the stochastic dominance properties with some finite $p$-th moment (\Cref{def: stoch_dom_p}) for an AoI variable $\tau_{ij}(n)$. The idea is to find a uniform upper bound $u:\N_0 \to \R_{\ge0}$, such that $$\Pr{\tau_{ij}(n) >m} \le u(m)$$ for all $m \ge N$ independent of $n\in \N_0$ for some $N\in \N_0$ and $\lim\limits_{m\to \infty} u(m) = 0$. We can now use this bound to define the CDF of a new random variable. Since $\lim\limits_{m\to \infty} u(m) = 0$ there is some $M \in \N_0$, such that $u(m) \le 1$ for all $m\ge M \ge N$. Now define a non-negative integer-valued random variable $\overline{\tau}_{ij}$ by describing its CDF (more precisely its complementary CDF) as follows:
\begin{align}
\Pr{\overline{\tau}_{ij} > m} &= 1, \quad  & 0\le  m < M, \\
\Pr{\overline{\tau}_{ij} > m} &= u(m) , \quad & m\ge M. 
\end{align}
By definition $\overline{\tau}_{ij}$ stochastically dominates all $\tau_{ij}(n)$ for all $n\in \N_0$. Moreover, if $$\sum_{m=0}^\infty ((m+1)^{p}-m^p) u(m) < \infty$$ for some $p>0$, then it will follow from \Cref{eq:moment_eq} that $\tau_{ij}(n)$ is stochastically dominated with finite $p$-th moment.

As the next step, we describe how we can find a function $u(m)$ for the above construction. Consider a $(\varepsilon, \kappa)$-SSC network. Let $(\cV,\cE)$ be the strongly connected graph associated with the $(\varepsilon, \kappa)$-SSC network and fix an edge $(i,j) \in \cE$.  Let $\Delta(m)$ be an increasing sequence in $\N$, with $\lim\limits_{m \to \infty} \Delta(m) = \infty$. Now for each $n,m \in \N_0$ use this sequence to define time indices
\begin{align}
\label{eq:time_ind}
n_1 \coloneqq n-m + \Delta(m),  \qquad n_k &\coloneqq n_{k-1} + 2\Delta(m)
\end{align}
as long as $n_k \le n$. Let $L(m)$ be the number of constructed time indices and observe that
\begin{equation}
\label{eq:bound_u}
\Pr{\tau_{ij}(n) >m} \le \Pr{ \bigcap_{k=1}^{L(m)}  \{\tau_{ij}(n_k) > \Delta(m) \}}.
\end{equation}
This follows since $\tau_{ij}(n) >m$ implies $\tau_{ij}(n_k) > \Delta(m)$ for all $k \in \{1,\ldots, L(m)\}$ by the very construction of the time indices $n_k$. In general, we can now derive $u(m)$ as an upper bound to the right-hand side in \eqref{eq:bound_u}, which we illustrate immediately for case of independent network communication, i.e. were the events $A_{ij}^n$ are independent. For the case of dependent network communication, this will be formulated in \Cref{lem: aoi} in the next section.

\begin{example}[Independent network communication]
	\label{ex:iid}
Let $(\cV, \cE)$ be the strongly connected graph associated with a $(\varepsilon, \kappa)$-SSC network and consider an edge $(i,j) \in \cE$. Using the exemplary network independence and \Cref{lem: prelim}, we have from  \eqref{eq:bound_u} that
\begin{equation}
\label{eq:illustration_iid}
\Pr{\tau_{ij}(n) >m} \le \prod_{k=1}^{L(m)} \Pr{ \tau_{ij}(n_k) > \Delta(m)} < \varepsilon^{L(m)},
\end{equation}
for all $m$ large enough such that $\Delta(m) \ge \kappa$. Now define 
$$u(m) \coloneqq \varepsilon^{L(m)}, \quad \Delta(m) \approx \sqrt{m} $$
and hence $L(m) \approx \nicefrac{\sqrt{m}}{2}$. 
The construction described above then yields a dominating random variable $\overline{\tau}$ for all $\tau_{ij}(n)$ for all $n\in \N_0$. It is now easy to verify that
$$ \Ew{\overline{\tau}^p} \le \sum_{m=0}^\infty ((m+1)^{p}-m^p) u(m) \approx \sum_{m=0}^\infty((m+1)^{p}-m^p) \varepsilon^{\nicefrac{\sqrt{m}}{2}} < \infty$$
\emph{for all $p\ge0$}, since the series is a version of a weighted geometric series. We have therefore established that with independent communication, each AoI variable $\tau_{ij}(n)$ with $(i,j) \in \cE$ is stochastically dominated with finite $p$-th moment \emph{for every $p\ge0$.} This underlines how strong the assumption of independent communication is. 
\end{example}

\subsection{Proof of \Cref{thm: 0}}

In the previous example, we used the independence of the edge events $A_{ij}^n$ to establish a uniform upper bound for $\Pr{\tau_{ij}(n) >m}$ with geometric decay. Recall that $\Delta(m)$ was used in \eqref{eq:time_ind} to define the time indices $n_k$, such that $n_{k}-\Delta(m) - n_{k-1} = \Delta(m)$. 
Now consider the case where the edge events are not independent but merely mixing. We will see that we can then find a new upper bound to  \eqref{eq:bound_u}, such that
\begin{equation}
\Pr{\tau_{ij}(n) >m} \le \Pr{ \bigcap_{k=1}^{L(m)}  \{\tau_{ij}(n_k) > \Delta(m) \}} \le \varepsilon^{L(m)} + error(\Delta(m)).
\end{equation}
with an error term $error(\Delta(m))$ due to the non independence. 

Now, if the mixing coefficients associated with processes $\Ind{\bigcup_{k=n}^{n+\eta} A^n_{ij}}$ decay rapidly enough, we expect that $error(\Delta(m))$ decays sufficiently, such that the new upper bound still satisfies some summability properties and hence allows that we establish stochastic dominance properties. The following lemma makes this intuition precise. We establishes the stochastic dominance property of order $p\ge0$ for those network edges $(i,j)$ that ensure that the network is $(\varepsilon, \kappa)$-SSC.

\begin{lemma}
	\label{lem: aoi}
	Let  $\{(\cV,\cE^n)\}_{n \in \N_0}$ be a time-varying network that is $(\varepsilon, \kappa)$-SSC (\Cref{def:SSC}) with associated strongly connected graph $(\cV, \cE)$. If for any $(i,j) \in \cE$ the process $\Ind{\bigcup_{k=n}^{n+\eta} A^n_{ij}}$ is $p$-strongly mixing (\Cref{def: mixing_process}) for some $p\ge 0$ and some $\eta \in \N_0$, then $\tau_{ij}(n)$ is stochastically dominated with finite $p$-th moment (\Cref{def: stoch_dom_p}).
\end{lemma}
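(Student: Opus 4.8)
The plan is to realise the recipe of \Cref{sec:construction}: produce a function $u\colon\N\to[0,1]$ with $\sup_{n}\Pr{\tau_{ij}(n)>m}\le u(m)$ for all large $m$, with $u(m)\to0$ and $\sum_{m}\bigl((m+1)^{p}-m^{p}\bigr)u(m)<\infty$; then \Cref{eq:moment_eq} together with the complementary-CDF construction of \Cref{sec:construction} gives a single $\overline\tau$ dominating all $\tau_{ij}(n)$ with $\Ew{\overline\tau^{p}}<\infty$, i.e.\ the stochastic dominance with finite $p$-th moment of \Cref{def: stoch_dom_p}. Fix the edge $(i,j)\in\cE$ and set $Y^{l}:=\Ind{\bigcup_{k=l}^{l+\eta}A^{k}_{ij}}$. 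First I would reduce to the case $\eta\ge\kappa$: replacing $\eta$ by $\eta':=\max(\eta,\kappa)$ turns this into the sliding-block maximum $n\mapsto\max_{0\le s\le\eta'-\eta}Y^{n+s}$ of $Y$, whose $\alpha$-mixing coefficients $\alpha'$ obey $\alpha'(n)\le\alpha\bigl(n-(\eta'-\eta)\bigr)$, so that $(n+c)^{p-1}\le Cn^{p-1}$ preserves $\sum_{n}n^{p-1}\alpha'(n)<\infty$; note also that $p$-strong mixing forces $\alpha(n)\to0$. With $\eta\ge\kappa$, \Cref{asm: network-A}/\Cref{def:SSC} give $\Pr{Y^{l}=0}\le1-\varepsilon$ for every $l$, and, exactly as in the proof of \Cref{lem: prelim}, $\{\tau_{ij}(n)>m\}\subseteq\{Y^{n-m+1}=\dots=Y^{n-\eta}=0\}$. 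Hence an upper bound for $\Pr{\tau_{ij}(n)>m}$, uniform in $n$, is the probability that a length-$(m-\eta)$ run of the binary $\alpha$-mixing process $Y$ is identically zero.

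The core step bounds such run probabilities. Rather than the evenly spaced test points of \eqref{eq:time_ind} (which, as \Cref{ex:iid} shows, already suffice in the independent case with $\Delta(m)\asymp\sqrt m$), I would pick test times $l_{1}<l_{2}<\dots<l_{K}$ inside $[n-m+1,n-\eta]$ whose consecutive gaps $g_{j}:=l_{j+1}-l_{j}$ decay geometrically, $g_{j}\asymp\theta\,m\,(1+\theta)^{-j}$, down to $g_{K-1}\asymp1$, so that $\sum_{j}g_{j}\le m$ and $K=K(m)\asymp\theta^{-1}\log m$. Peeling the events $\{Y^{l_{j}}=0\}$ off one at a time from the earliest index, using $\Pr{Y^{l_{j}}=0}\le1-\varepsilon$ and the mixing bound $\bigl|\Pr{A\cap B}-\Pr{A}\Pr{B}\bigr|\le\alpha(\operatorname{dist})$ for the relevant initial- and tail-$\sigma$-algebras, gives
\[
\Pr{\tau_{ij}(n)>m}\ \le\ u(m)\ :=\ (1-\varepsilon)^{K(m)}+\sum_{j=1}^{K(m)-1}(1-\varepsilon)^{j-1}\,\alpha\!\bigl(g_{j}(m)\bigr),
\]
a bound independent of $n$ with $u(m)\to0$. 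The geometric weight $(1-\varepsilon)^{j-1}$ on the $j$-th decoupling error is what makes the non-uniform schedule worthwhile: the leading error $\alpha(g_{1}(m))=\alpha\bigl(\Theta(m)\bigr)$ is as small as the assumed mixing rate allows, while the small final gaps contribute only errors that are damped geometrically.

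It then remains to verify $\sum_{m}m^{p-1}u(m)<\infty$ (recall $(m+1)^{p}-m^{p}\asymp m^{p-1}$) for a small enough $\theta=\theta(\varepsilon,p)>0$. For the geometric term, $K(m)\gtrsim\theta^{-1}\log m$ makes $(1-\varepsilon)^{K(m)}$ at most $m^{-\gamma}$ with $\gamma=\gamma(\varepsilon,\theta)\to\infty$ as $\theta\to0$, hence summable against $m^{p-1}$ once $\theta$ is small. For the error term I would interchange the sums over $m$ and $j$; for fixed $j$ the map $m\mapsto g_{j}(m)$ is essentially linear with slope $c_{j}:=\theta(1+\theta)^{-j}$, so $\sum_{m}m^{p-1}\alpha\bigl(g_{j}(m)\bigr)\asymp c_{j}^{-p}\sum_{d}d^{p-1}\alpha(d)=c_{j}^{-p}S$ with $S<\infty$ by hypothesis, and therefore
\[
\sum_{m}m^{p-1}\sum_{j=1}^{K(m)-1}(1-\varepsilon)^{j-1}\,\alpha\!\bigl(g_{j}(m)\bigr)\ \asymp\ \frac{S\,(1+\theta)^{p}}{\theta^{p}}\sum_{j\ge1}\bigl[(1-\varepsilon)(1+\theta)^{p}\bigr]^{j-1},
\]
which converges precisely when $(1-\varepsilon)(1+\theta)^{p}<1$, i.e.\ for all sufficiently small $\theta$. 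Both constraints on $\theta$ can be met at once, so $\sum_{m}m^{p-1}u(m)<\infty$ and the construction of \Cref{sec:construction} finishes the proof.

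The step I expect to be the main obstacle is exactly this balancing of the two parts of $u(m)$. With a single spacing scale $\Delta(m)$ as in \eqref{eq:time_ind} one is forced to take $\Delta(m)=o(m)$ in order that the geometric term $(1-\varepsilon)^{\Theta(m/\Delta(m))}$ be summable against $m^{p-1}$; but the (already geometrically damped) decoupling error is then still of order $\alpha(\Delta(m))$, and $\sum_{m}m^{p-1}\alpha(\Delta(m))$ with $\Delta(m)=o(m)$ converges only under a hypothesis strictly stronger than $p$-strong mixing — e.g.\ $\sum_{d}d^{2p-1}\alpha(d)<\infty$ for $\Delta(m)\asymp\sqrt m$ — so no single-scale schedule recovers exactly the $p$-th moment. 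Closing this gap in the exponent is precisely what forces the geometrically decaying gaps together with the geometric damping of the successive decoupling errors. A secondary, essentially bookkeeping, point is to express everything through the $\eta$-window process $Y$ rather than the raw indicators $\Ind{A^{k}_{ij}}$, including the $\eta\ge\kappa$ reduction above.
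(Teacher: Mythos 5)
Your argument is correct, and it reaches the conclusion by a genuinely different route in the key step. You correctly identify the central obstacle: with a single spacing scale $\Delta(m)$ and the crude per-block bound $1-\varepsilon$ from \Cref{lem: prelim}, one cannot simultaneously make the geometric term $(1-\varepsilon)^{L(m)}$ and the decoupling error summable against $m^{p-1}$ under only $p$-strong mixing. The paper resolves this with a two-stage bootstrap on a \emph{uniform} schedule: Step 3 first extracts a polynomial tail bound $\Pr{\tau_{ij}(n)>m}\le m^{-\mu}$ (using $\Delta(m)\approx\sqrt m$ and the fact that $p$-strong mixing forces $\alpha(m)\in\cO(m^{-\tilde\mu})$), and Step 4 then re-inserts this improved per-block probability with the linear spacing $\Delta(m)=\lceil\delta m\rceil$, so that a \emph{constant} number $L(m)\asymp 1/\delta$ of blocks already yields $(\delta m)^{-\mu L(m)}\le(\delta m)^{-(p+1)}$, while the error term becomes $\alpha(\lceil\delta m\rceil)$, summable against $m^{p-1}$ exactly as in your computation. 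Your fix is instead a single-pass, multi-scale schedule with geometrically decaying gaps, exploiting the geometric damping $(1-\varepsilon)^{j-1}$ of the successive decoupling errors and an interchange of summation; the convergence condition $(1-\varepsilon)(1+\theta)^{p}<1$ plays the role of the paper's choice $\mu(\tfrac{1}{4\delta}-1)\ge p+1$. Both arguments are sound; yours avoids the intermediate polynomial bound altogether and makes the trade-off between the two error sources explicit in a single parameter $\theta$, at the cost of a slightly more delicate bookkeeping of the gaps (the terms with $g_j\lesssim\eta'$ should be bounded by $\tfrac14(1-\varepsilon)^{j-1}$ and absorbed into the geometric term, and the shifts $\alpha(g_j-\eta')$ versus $\alpha(g_j)$ handled as you indicate). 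Your reduction via the widened window $\eta'=\max(\eta,\kappa)$ and the sliding-block maximum is also a clean alternative to the paper's rescaling to $\eta=0$, and working throughout with the events $\{Y^{l_j}=0\}$ sidesteps the measurability caveat the paper handles by passing to the direct-communication AoI.
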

\begin{proof}
	Fix an edge $(i,j) \in \cE$. The theme of the proof is to establish a uniform upper bound to the complementary CDF of $\tau_{ij}(n)$ independent of $n$, such that the construction from \Cref{sec:construction} yields the required dominating random variable.
	
	\emph{Step 1 (Reduction to $\eta =0$)}: The $p$-strongly mixing property of the network guarantees mixing of the process $\Ind{\bigcup_{k=n}^{n+\eta} A^n_{ij}}$ for some $\eta \in \N_0$.  W.l.o.g. we can assume that $\eta=0$. This is justified as follows. Lets denote by $\tau^\eta_{ij}(k)$ a new random variable that captures the time, since the last interval of the form $[m\eta,(m+1)\eta]$ with at least one successful transmission from $i$ to $j$. The case $\eta= 0$ then yields the conclusion of the Lemma for $\tau^\eta_{ij}(k)$, i.e. there will be random variable $\overline{\tau}^\eta_{ij}$ that stochastically dominates all $\tau^\eta_{ij}(k)$ with $\Ew{(\overline{\tau}^\eta_{ij})^p} < \infty$.
	For any $k \ge 0 $ and $n \in  \{k\eta, (k+1)\eta\}$, we have 
	$\tau_{ij}(n) \le \eta(\tau_N(k) + 1) $.
	Therefore,
	\begin{equation}
	\Pr{\tau_{ij}(n) > m} \le \Pr{N (\tau^\eta_{ij} (k) + 1) > m} \le \Pr{\eta (\overline{\tau}^\eta_{ij} + 1) > m}
	\end{equation}
	and $\Ew{\eta^p (\overline{\tau}^\eta_{ij} + 1)^p} < \infty$ by Minkowski's inequality. Hence, $ \eta(\tau_N(k) + 1)$ would be the required dominating random variable for $\tau_{ij}(n)$ and we may therefore assume $\eta=0$.
	
	\emph{Step 2 (Initial CDF bound)}: Fix $m \in \N_0$ and recall the definition of $\Delta(m)$ and the associated sequence $n_k$ for each $n \in \N_0$ from \Cref{sec:construction}. We have
	\begin{equation}
	\label{eq:bound_u2}
	\Pr{\tau_{ij}(n) >m} \le \Pr{ \bigcap_{k=1}^{L(m)}  \{\tau_{ij}(n_k) > \Delta(m) \}}. \tag{\ref{eq:bound_u} recalled}
	\end{equation}
	\emph{With a slide abuse of notation we will now refer with $\tau_{ij}(n)$ to the age of information associated with the direct information exchange from $i$ to $j$. The age of information associated with direct information exchange by definition stochastically dominates the actual AoI. Without this step we would technically require a stronger mixing requirement, specifically, one for the events generated by all $A_{ij}^n$ and not only for the events generated by $A_{ij}^n$ for the pair $(i,j)$. Note that \Cref{lem: prelim} also directly holds for this case, since we anyway used the direct information exchange to prove it.}

	We will now establish an upper bound to \eqref{eq:bound_u2} using that $\Ind{A^n_{ij}}$ is $p$-strongly mixing. For this, define the following sub-$\sigma$-algebras generated by the events $A^n_{ij}$:
	\begin{equation}
	\cF_l^s \coloneqq \sigma\left( A^n_{ij} \mid l \le n \le s\right), \quad l \in \N_0, s\in \N_0 \cup \{\infty\}.
	\end{equation}
	The important generated events are, whether the AoI variables at some time step $s \in \N_0$ exceed a threshold $l \in \N_0$, i.e. whether $\{\tau_{ij}(s) > l\}$.
	Since the event $\{\tau_{ij}(s) > l\}$ is generated by the events $A^k_{ij}$ with $k \in \{s-l+1,\ldots,s-1,s\}$,we have that
	\begin{equation}
	\{\tau_{ij}(s) > l\} \in \cF^s_{s-l+1}.
	\end{equation}
	For this, we required the reduction to age of information associated with direct information exchange.
	It then follows by definition of the time indices $n_k$ that 
	\begin{equation}
	\{\tau_{ij}(n_{L(m)}) > \Delta(m)  \} \in \cF^{n_{L(m)}}_{n_{L(m)}-\Delta(m) + 1} \subset \cF^{\infty}_{n_{L(m)}-\Delta(m)}
	\end{equation}
	and
	\begin{equation}
	\{\tau_{ij}(n_{k}) > \Delta(m)  \} \in \cF^{n_{k}}_{n_{k}-\Delta(m) + 1} \subset \cF^{n_{L(m)-1}}_0
	\end{equation}
	for every $k \in \{1, \ldots, L(m)-1\}$. Hence,
	\begin{equation}
	\bigcap_{k=1}^{L(m)-1} \{ \tau_{ij}(n_k) > \Delta(m) \} \in \cF^{n_{L(m)-1}}_0.
	\end{equation}
	
	By construction of the indices $n_k$, we have $n_{L(m)}-\Delta(m) - n_{L(m)-1} = \Delta(m)$. The strong mixing property of the process $\Ind{A^n_{ij}}$ therefore implys that
	\begin{equation}
	\begin{split}
	\label{eq: recursion}
	\Pr{ \bigcap_{k=1}^{L(m)}  \{\tau_{ij}(n_k) > \Delta(m) \}} &\le \Pr{\{\tau_{ij}(n_{L(m)}) > \Delta(m) \}}\Pr{\bigcap_{k=1}^{L(m)-1} \{ \tau_{ij}(n_k) > \Delta(m) \}} \\ & \qquad + \alpha(\Delta(m)),
	\end{split}
	\end{equation}
	where $\alpha(n)$ are the mixing coefficients associated with the process $\Ind{A^n_{ij}}$.
	It now follows from \Cref{lem: prelim} that $\Pr{\tau_{ij}(n_k) > \Delta(m) } < \varepsilon$ for $\Delta(m) \ge \kappa$, since the network is $(\varepsilon, \kappa)$-SSC. Hence, 
	\begin{equation}
	\label{eq: recursion2}
	\Pr{ \bigcap_{k=1}^{L(m)}  \{\tau_{ij}(n_k) > \Delta(m) \}} \le \varepsilon\Pr{\bigcap_{k=1}^{L(m)-1} \{ \tau_{ij}(n_k) > \Delta(m) \}}  + \alpha(\Delta(m)).
	\end{equation}
	Applying \eqref{eq: recursion} and \eqref{eq: recursion2} successively yields:
	\begin{align}
	\Pr{\tau_{ij}(n) >m} &\le \prod_{k=1}^{L(m)} \Pr{\{\tau_{ij}(n_{k}) > \Delta(m) \}}  + \sum_{k=1}^{L(m)-1}
	\varepsilon^{k-1} \alpha(\Delta(m)) \label{eq: uniform_bound1}\\ &\le \varepsilon^{L(m)} + \frac{1}{1-\varepsilon} \alpha(\Delta(m)) \label{eq: uniform_bound2}.
	\end{align}
	for $\Delta(m) \ge \kappa.$ 
	
	For $p=0$, we can now apply the construction presented in \Cref{sec:construction} with the bound \eqref{eq: uniform_bound2} to obtain a dominating random variable. Here we may choose $\Delta(m)$ as in \Cref{ex:iid}. For $p>0$ it is now crucial to choose $\Delta(m)$, such that both terms in \eqref{eq: uniform_bound2} decay rapidly enough to obtain the required stochastic dominance property with finite $p$-th moment. However, it turns out that the bound \eqref{eq: uniform_bound2} is only sufficient to achieve this for all $q<p$, due to the merely geometric decay of the first term. The next step therefore uses \eqref{eq: uniform_bound2} to obtain a better upper bound for \eqref{eq: uniform_bound1}.
	
	\emph{Step 3}: 
	To improve the CDF bound for $p>0$, we use that $\sum_{m=0}^{\infty} m^{p-1} \alpha(m)$ is summable. It then follows that for $p>1$ we have
	\begin{equation}
		\alpha(m) \in \cO(m^{-(p-1)})
	\end{equation}
	and for $0< p\le 1$ we have
	\begin{equation}
	\alpha(m) \in \cO(m^{-p}),
	\end{equation}
	since for this case $m^{p-1}\alpha(m)$ is guaranteed to be decreasing as $p-1<0$.
	Both cases show that there is a constant $c$ and some $\tilde{\mu}>0$, such that
	\begin{equation}
	\alpha(\Delta(m)) \le C(\Delta(m))^{-\tilde{\mu}}
	\end{equation}
	for sufficiently large $m$.
	With \eqref{eq: uniform_bound2} it then follows that
	\begin{equation}
	\Pr{\tau_{ij}(n) >m} \le  \varepsilon^{L(m)} + c(\Delta(m))^{-\tilde{\mu}}.
	\end{equation}
	for sufficiently large $m$.
	Since the first term above is exponential and the second is rational, we can find a new $\mu >0$, such that
	\begin{equation}
	\Pr{\tau_{ij}(n) > m} <  m^{-\mu}
	\end{equation}
	for $m$ sufficiently large. For this, one may again choose $\Delta(m) \approx \sqrt{m}$.

	\emph{Step 4 - (Verifying the stochastic dominance property with finite $p$-th moment)}: 
	
	We now insert the CDF bound from step 4 in \eqref{eq: uniform_bound1} and obtain
	\begin{equation}
	\begin{split}
	\label{eq: uniform_bound3}
	\Pr{\tau_{ij}(n) >m} \le  \Delta(m)^{-\mu L(m)} + \frac{1}{1-\varepsilon} \alpha(\Delta(m))
	\end{split}
	\end{equation}
	for $m$ sufficiently large. 
	Now choose $\delta \in (0,1)$, such that
	\begin{equation}
		\mu(\frac{1}{4\delta} -1) \ge p+1
	\end{equation}
	and then choose $\Delta(m) = \lceil \delta m \rceil$. We choose this to guarantee the required summability property of the first term in \eqref{eq: uniform_bound3}, since
	\begin{equation}
		L(m) = \lfloor \frac{m}{2\Delta(m)} \rfloor \ge  \frac{1}{4\delta} - 1
	\end{equation}
	for $m\ge \frac{1}{2\delta}$. Hence, we have 
	\begin{align}
	\Pr{\tau_{ij}(n) >m} &\le  (\delta m)^{-(p+1)} + \frac{1}{1-\varepsilon} \alpha(\lceil \delta m \rceil)
	\end{align}
	for $m\ge \frac{1}{2\delta}$. 
	
	Now define $$u(m) \coloneqq (\delta m)^{-(p+1)} + \frac{1}{1-\varepsilon} \alpha(\lceil \delta m \rceil)$$ and apply the construction presented in \Cref{sec:construction}. This yields a non-negative integer-valued random variable $\overline{\tau}_{ij}$ that stochastically dominates $\tau_{ij}(n)$ for all $n\in \N$. Moreover, we have $\Ew{\overline{\tau}_{ij}^p} < \infty$, if 
	\begin{equation}
	\sum_{m=0}^{\infty} ((m+1)^p-m^p) u(m) < \infty.
	\end{equation}
	The first part of the series is finite, since
	\begin{equation}
		\sum_{m=1}^{\infty} ((m+1)^p-m^p) (\delta m)^{-(p+1)} \le \frac{2^p}{\delta^{p+1}} \sum_{m=1}^{\infty} m^{-2} < \infty,
	\end{equation}
	where we used that $((m+1)^p-m^p) \le 2^p m^{p-1}$ for $m \in \N$.
	
	For the second part of the series, note that $\alpha(n)$ is by construction a monotonically decreasing function from $\N_0$ to $[0, \frac{1}{4}]$ \cite{bradley2005basic}. Now extend 	$\alpha(n)$ by linear interpolation to a monotonically decreasing function from $\R_{\ge0}$ to $[0, \frac{1}{4}]$. Then for all $m \in \N_0$, we have $\alpha(\lceil \delta m \rceil) \le \alpha(\delta m)$ by monotonicity. Hence the second part is finite, since
	\begin{equation}
	\begin{split}
	\sum_{m=1}^{\infty} ((m+1)^p-m^p)\alpha(\delta m) &\le 2^p \sum_{m=1}^{\infty} m^{p-1} \alpha(\delta m)  \\
	&\le \frac{2^{2p-1}}{\delta^p} \sum_{m=1}^{\infty} m^{p-1} \alpha(m) < \infty
	\end{split}
	\end{equation}
	The second inequality can be shown using a similar construction as in \Cref{lem:finite_CDFsum}. Finally, the finiteness of the last summation follows from the assumed $p$-strongly mixing property.

\end{proof}

We have thus established the stochastic dominance property of order $p\ge0$ for those network edges that ensure that the network is SSC under the $p$-strongly mixing condition. 
As the next step, we show an elementary lemma associated with the AoI variables of a time-varying network. The lemma shows that the existence of stochastically dominating random variables associated with the AoI variables of a time-varying network is a transitive property.

\begin{lemma}
	\label{lem: key_lemma}
	For nodes $i,j,k \in \cV$ of a time-varying network suppose $\tau_{ij}(n)$ and $\tau_{jk}(n)$ are stochastically dominated by $\overline{\tau}_{ij}$ and $\overline{\tau}_{jk}$, respectively. Then
	\begin{enumerate}
		\item There is random variable $\overline{\tau}_{ik}$ that stochastically dominates $\tau_{ik}(n)$.
		\item If moreover $\Ew{\overline{\tau}_{ij}^p} + \Ew{\overline{\tau}_{jk}^p} < \infty$ for some $p>0$, then also $\Ew{\overline{\tau}_{ik}^p} < \infty$.
	\end{enumerate}
\end{lemma}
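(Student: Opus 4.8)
The plan is to reduce the whole statement to one elementary relay inequality for the age of information plus a single application of stochastic dominance, so that \emph{no} independence or dependency‑decay hypothesis between edges is ever needed. First I would record the combinatorial fact that underlies relaying: since every agent retains, component by component, the freshest copy it has ever received, the copy of $x_i$ held at $k$ at time $n$ is at least as fresh as the copy $j$ held the last time $j$ communicated to $k$, which happened at time $n-\tau_{jk}(n)$. Hence, for all $n$,
\begin{equation*}
\tau_{ik}(n)\ \le\ \tau_{jk}(n)+\tau_{ij}\big(n-\tau_{jk}(n)\big).
\end{equation*}
I would also record the time‑Lipschitz property of AoI, $\tau_{ij}(t+1)\le \tau_{ij}(t)+1$ for every $t$, which holds because between two successful receptions the age grows by exactly one per step.

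Next, fix $m\in\N_0$ and split $m=m_1+m_2$ with $m_1:=\lfloor m/3\rfloor$ and $m_2:=m-m_1$. From the relay inequality, $\{\tau_{ik}(n)>m\}$ forces either $\tau_{jk}(n)>m_1$, or $\tau_{jk}(n)=s$ for some $s\in\{0,\dots,m_1\}$ with $\tau_{ij}(n-s)>m_2$; thus
\begin{equation*}
\{\tau_{ik}(n)>m\}\ \subseteq\ \{\tau_{jk}(n)>m_1\}\ \cup\ \bigcup_{s=0}^{m_1}\{\tau_{ij}(n-s)>m_2\}.
\end{equation*}
The key step is to collapse this union. By the time‑Lipschitz property, $\tau_{ij}(n-m_1)\le m_2-m_1$ implies $\tau_{ij}(n-s)\le \tau_{ij}(n-m_1)+(m_1-s)\le m_2-s\le m_2$ for every $s\in\{0,\dots,m_1\}$; the contrapositive gives $\bigcup_{s=0}^{m_1}\{\tau_{ij}(n-s)>m_2\}\subseteq\{\tau_{ij}(n-m_1)>m_2-m_1\}$. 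This is precisely where the naive argument (condition on $\tau_{jk}(n)=s$ and union‑bound over the $m_1+1$ values of $s$, losing a factor $\sim m$ and failing even for part~1) is replaced by something sharp, and it is the only place one might be tempted to invoke independence between edges — so I expect this to be the crux. Since $m_2-m_1=m-2\lfloor m/3\rfloor\ge\lfloor m/3\rfloor$, the stochastic dominance of $\tau_{jk}(\cdot)$ by $\overline\tau_{jk}$ and of $\tau_{ij}(\cdot)$ by $\overline\tau_{ij}$ then yield, \emph{uniformly in $n$},
\begin{equation*}
\Pr{\tau_{ik}(n)>m}\ \le\ \Pr{\overline\tau_{jk}>\lfloor m/3\rfloor}+\Pr{\overline\tau_{ij}>\lfloor m/3\rfloor}\ =:\ g(m).
\end{equation*}

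Finally I would feed this into the construction of \Cref{sec:construction}: $g$ is non‑increasing and $g(m)\to0$ because $\overline\tau_{ij}$ and $\overline\tau_{jk}$ are genuine random variables, so $\Pr{\overline\tau_{ik}>m}:=\min\{1,g(m)\}$ is the complementary CDF of a non‑negative integer‑valued random variable $\overline\tau_{ik}$, which by construction stochastically dominates every $\tau_{ik}(n)$; this proves part~1. For part~2, let $M_0$ be the smallest index with $g(M_0)\le1$ (finite, and $g$ stays $\le1$ afterward), and use \Cref{eq:moment_eq}:
\begin{equation*}
\Ew{\overline\tau_{ik}^p}=\sum_{m=0}^{M_0-1}\big((m+1)^p-m^p\big)+\sum_{m\ge M_0}\big((m+1)^p-m^p\big)g(m)\ \le\ M_0^p+\sum_{m=0}^{\infty}\big((m+1)^p-m^p\big)g(m).
\end{equation*}
Grouping the last sum by the value $j=\lfloor m/3\rfloor$ (three consecutive $m$'s share each $j$, with $\sum_{\lfloor m/3\rfloor=j}\big((m+1)^p-m^p\big)=(3j+3)^p-(3j)^p=3^p\big((j+1)^p-j^p\big)$) and invoking \Cref{eq:moment_eq} once more gives $\sum_m\big((m+1)^p-m^p\big)g(m)=3^p\big(\Ew{\overline\tau_{ij}^p}+\Ew{\overline\tau_{jk}^p}\big)$, hence $\Ew{\overline\tau_{ik}^p}\le M_0^p+3^p\big(\Ew{\overline\tau_{ij}^p}+\Ew{\overline\tau_{jk}^p}\big)<\infty$. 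The only delicate bookkeeping is the off‑by‑one in the relay inequality and the time‑Lipschitz bound; everything after that is a short computation.
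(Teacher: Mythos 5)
Your proposal is correct and follows essentially the same route as the paper: both reduce the two-hop age to a uniform bound of the form $\Pr{\tau_{ik}(n)>m}\le \Pr{\overline{\tau}_{ij}>cm}+\Pr{\overline{\tau}_{jk}>cm}$ (the paper uses the inclusion $\{\tau_{ij}(n-\frac{m}{2})\le\frac{m}{2}\}\cap\{\tau_{jk}(n)\le\frac{m}{2}\}\subset\{\tau_{ik}(n)\le m\}$ with $c=\frac12$, you derive the same thing with $c=\frac13$ via the relay inequality and the time-Lipschitz property, which is just a more explicit justification of that inclusion), and both then define $\overline{\tau}_{ik}$ through this tail bound and evaluate its $p$-th moment with the telescoping identity, yielding $2^p$ versus your $3^p$ as the constant. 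No gap; the differences are cosmetic.
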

\begin{proof}
	Fix $i,j,k \in \cV$ and some $m\ge 2$. Now observe the following inclusion associated with events of the three AoI variables $\tau_{ij}(n), \tau_{jk}(n)$ and $\tau_{ik}(n)$:
	\begin{equation}
	\label{eq: keylemma_property}
	\{\tau_{ij}(n-\frac{m}{2}) \le \frac{m}{2}\} \cap \{\tau_{jk}(n) \le \frac{m}{2}\} \subset \{\tau_{ik}(n) \le m\}, 
	\end{equation}
	The inclusion states that the two events
	\begin{enumerate}
		\item The AoI is less than $\frac{m}{2}$ for information received at node $j$ from node $i$ at time $n-\frac{m}{2}$
		\item The AoI is less than $\frac{m}{2}$ for information received at node $k$ from node $j$ at time $n$
	\end{enumerate}
	imply the event that the AoI is less than $m$ for information received at node $k$ from node $i$ at time $n$.
	By taking the complement of the inclusion in \eqref{eq: keylemma_property}, we have that
	\begin{align}
	\Pr{\tau_{ik}(n) > m} &\le  \Pr{ \{\tau_{ij}(n-\frac{m}{2}) > \frac{m}{2}\} \cup \{\tau_{jk}(n) > \frac{m}{2}\} } \\
	&\le  \Pr{\tau_{ij}(n-\frac{m}{2}) > \frac{m}{2} }  + \Pr{\tau_{jk}(n) > \frac{m}{2}} \\
	&< \Pr{ \overline{\tau}_{ij} > \frac{m}{2}} + \Pr{\overline{\tau}_{jk} > \frac{m}{2}}.
	\end{align}
	In the last step, we used the assumption that there are random variables $\overline{\tau}_{ij}$ and $\overline{\tau}_{jk}$ that stochastically dominate $\tau_{ij}(n)$ and $\tau_{jk}(n)$, respectively, for all $n$. 
	
	Now $\overline{\tau}_{ij}$ and $\overline{\tau}_{jk}$ are integer-valued, so there is some $M \in \N$ such that 
	\begin{equation}
	\Pr{\overline{\tau}_{ij} > \frac{m}{2} }  + \Pr{\overline{\tau}_{jk} > \frac{m}{2}} < 1
	\end{equation} for all $m\ge M$. Define a non-negative integer-valued random variable $\overline{\tau}_{ik}$ by defining its CDF:
	\begin{align}
	\Pr{\overline{\tau}_{ik} > m} &\coloneqq 1, \quad \text{ for all } 0 \le m < M, \\
	\Pr{\overline{\tau}_{ik} > m} &\coloneqq \Pr{\overline{\tau}_{ij} > \frac{m}{2} }  + \Pr{\overline{\tau}_{jk} > \frac{m}{2}}, \quad \text{ otherwise}. 
	\end{align}
	This proves part $(a)$ of the lemma. 
	
	Now suppose $\Ew{\overline{\tau}_{ij}^p} + \Ew{\overline{\tau}_{jk}^p} < \infty$ for some $p>0$.
	We can now write the $p$-th moment of $\overline{\tau}_{ik}$ using its CDF from above:
	\begin{align}
	\Ew{\overline{\tau}_{ik}^p} &= \sum_{m=0}^\infty ((m+1)^{p}-m^{p}) \Pr{\overline{\tau}_{ik} > m} \\
	&\le \sum_{m=0}^\infty  ((m+1)^{p}-m^{p}) \Pr{\overline{\tau}_{ij} > \frac{m}{2} } + \sum_{m=0}^\infty  ((m+1)^{p}-m^{p}) \Pr{\overline{\tau}_{jk} > \frac{m}{2}} \\
	&=2^p\left(\Ew{\overline{\tau}_{ij}^p} + \Ew{\overline{\tau}_{jk}^p} \right) < \infty.
	\end{align}
	Where the equality follows from \Cref{eq:moment_eq}, since $2\overline{\tau}_{ij}$ and $2\overline{\tau}_{jk}$ are non-negative integer-valued random variables.
	This proves part $(b)$ of the lemma.
\end{proof}

\Cref{lem: key_lemma} allows that we extend the stochastic dominance properties from \Cref{lem: aoi} for node pairs $(i,j) \in \cE$ to arbitrary node pairs $(i,j) \in \cV^2$. We are now ready to prove \Cref{thm: 0}.

\begin{proof}[Proof of \Cref{thm: 0}]
	First, fix an arbitrary pairs of nodes $(i,j) \in \cV^2$. Since the network is SSC, it follows from \Cref{lem: aoi} there is a sequence of edges $\{(i_k,i_{k+1}) \}_{k=1}^{K-1} \in \cE$ for some $K \ge 1$, with $i_1 = i$ and $i_K = j$, such that for each $\tau_{i_ki_{k+1}}$, there is non-negative integer-valued random variable $\overline{\tau}_{i_ki_{k+1}}$ that stochastically dominates all $\tau_{i_ki_{k+1}}(n)$ for all $n \in \N_0$, with $\Ew{\overline{\tau}_{i_ki_{k+1}}^p} < \infty$. It now follows by induction using the transitive property of the AoI variables from \Cref{lem: key_lemma}(b), that there is a non-negative integer-valued random variable $\overline{\tau}_{ij}$ that stochastically dominates all $\tau_{ij}(n)$ for all $n \in \N_0$, with $\Ew{\overline{\tau}_{ij}^p} < \infty$.
	
	It is now left to verify that there is a single dominating random variables for all pairs $(i,j) \in \cV^2$. This essentially follows since we consider finitely many agents. For every $m \ge 0$, define
	\begin{equation}
	h(m) \coloneqq \sum_{(i,j) \in \cV^2} \Pr{\overline{\tau}_{ij} > m}.
	\end{equation}
	Since $|\cV^2| < \infty$, there is some $M \ge0$, such that $h(m) \le 1$ for all $m\ge M$. 
	Define a non-negative integer-valued random variable $\overline{\tau}$ by describing its CDF as follows:
	\begin{alignat}{2}
	\Pr{\overline{\tau}_{ij} > m} &= 1,  && 0\le  m < M, \\
	\Pr{\overline{\tau}_{ij} > m} &= h(m), \qquad && m\ge M. 
	\end{alignat}
	By construction $\overline{\tau}$ stochastically dominates all $\tau_{ij}(n)$ for all $(i,j) \in \cV^2$ and for all $n \in \N_0$. Finally, 
	we have 
	\begin{equation}
	\Ew{\overline{\tau}^p} \le \sum_{m=0}^\infty \left( (m+1)^{p}-m^{p} \right) h(m) = \sum_{(i,j) \in \cV^2} \Ew{\overline{\tau}_{ij}^p} < \infty, 
	\end{equation}
	where the equality simply follows from continuity of addition and since all $\Ew{\overline{\tau}_{ij}^p}$ are convergent.

\end{proof}

\section{Conclusions and future work}
\label{sec:conclusion}

In this work, we presented an asymptotic convergence analysis of distributed stochastic gradient descent that uses aged information. The required network assumptions have been weakened to the mere existence of non-negative integer-valued random variable with finite first moment that stochastically dominates all age of information random variables variables. This assumption can be satisfied with the new network \Cref{asm: network-A,asm: network-B}. These assumptions are significantly weaker then the common network assumptions in the literature. 
We hope that our assumptions penalize future work in distributed optimization under less restrictive network assumptions. Notably, instead of periodic or independent communication, we merely require asymptotically independent communication formulated using $\alpha$-mixing with the minimal requirement that $\sum_{n=0}^{\infty} \alpha(n) <\infty$.
 
It would be interesting to see, whether summability properties of $\alpha$-mixing coefficients indeed hold for representative physical wireless communication system. This might be possible when the underlying physical system has a mixing property in an ergodic sense. For example, hyperbolic systems are common models to describe electro magnetic wave propagation and it was shown in \cite{babillot2002mixing} that hyperbolic systems admit a strong mixing property in an ergodic sense.

To apply \Cref{asm: network-B} in practice, it would be most desirable if the $\alpha$-mixing coefficients (or an upper bound) for the network processes $\Ind{\bigcup_{k=n}^{n+\eta} A^n_{ij}}$ could be estimated from data. Unfortunately, there are only a handful of methods that estimate or approximate the mixing coefficients from data. One method that uses an approximation method based on histograms was presented in \cite{mcdonald2015estimating}. However, this method suffers from high complexity.
Very recently, a new method was presented in \cite{khaleghi2021inferring}. Most notably, the work presents a hypothesis test to decide, whether the sum of the alpha mixing coefficients is below an upper bound. With this, it is therefore now possible to verify with high confidence, whether \Cref{asm: network-B} holds for $p=1$ using data. 

\begin{funding}
Adrian Redder was supported by the German Research Foundation (DFG) - 315248657 and SFB 901.
\end{funding}


\bibliographystyle{imsart-nameyear.bst}
\bibliography{references}

\end{document}